\newcommand{\typepm}{\mathring{\pm}}
\newcommand{\typemp}{\mathring{\mp}}
\newtheorem{theorem}{Theorem}[section]
\newtheorem{lemma}[theorem]{Lemma}
\theoremstyle{definition}
\newtheorem{definition}[theorem]{Definition}
\newtheorem{example}[theorem]{Example}
\newtheorem{remark}[theorem]{Remark} 
\theoremstyle{corollary}
\newtheorem{corollary}[theorem]{Corollary}
\theoremstyle{proposition}
\theoremstyle{conjecture}
\theoremstyle{conditionalproposition}
\numberwithin{equation}{section}
\begin{document}

\title[Bifurcations in Kac--Murdock--Szeg\H{o} matrices]{Eigenvalue bifurcations in Kac--Murdock--Szeg\H{o} matrices with a complex parameter}

\author{George Fikioris}

\address{School of Electrical and Computer engineering, National Technical University of Athens, GR 157-73 Zografou, Athens, Greece}
\email{gfiki@ece.ntua.gr}

\subjclass[2010]{15B05, 15A18, 65F15}

\date{\today}

\keywords{Toeplitz matrix, Kac--Murdock--Szeg\H{o} matrix, Eigenvalues, Eigenvectors, Bifurcations}

\begin{abstract}
For complex $\rho$, the spectral properties of the Toeplitz matrix  $K_{n}(\rho)=\left[\rho^{|j-k|}\right]_{j,k=1}^{n}$, often called the Kac–-Murdock--Szeg\H{o} matrix, have been examined in detail in two recent papers. The second paper, in particular, introduced the concept of borderline curves. These are two closed curves in the complex-$\rho$ plane that consist of all the $\rho$ for which $K_n(\rho)$ possesses some eigenvalue whose magnitude equals the matrix dimension $n$. The purpose of the present paper is to  examine eigenvalue bifurcations in both a qualitative and a quantitative manner, and to discuss connections between bifurcations and the borderline curves.
\end{abstract}

\maketitle

\section{Introduction}

This paper is a direct continuation of \cite{Fik2018} and \cite{FikMavr}. For $\rho \in \mathbb{C}$ and $n\ge 3$, we deal with the Toeplitz matrix 
\begin{equation}\label{matrixdefinition}
K_{n}(\rho)=\left[\rho^{|j-k|}\right]_{j,k=1}^{n}=\begin{bmatrix}
1 & \rho & \rho^2 & \ldots & \rho^{n-1} \\
\rho & 1 & \rho & \ldots & \rho^{n-2} \\
\rho^2 & \rho & 1 & \ldots & \rho^{n-3} \\
\vdots & \vdots & \vdots & \ddots & \vdots \\
\rho^{n-1} & \rho^{n-2} & \rho^{n-3} & \ldots & 1
\end{bmatrix}
\end{equation}
which, in the special case $0<\rho<1$, is usually called Kac--Murdock--Szeg\H{o} matrix; see \cite{Fik2018} for applications and a history.
As discussed in \cite{Fik2018}, studies of $K_n(\rho)$ have served as first steps for the development of general theorems pertaining to Toeplitz matrices, or as means of illustrating such theorems. This has been true ever since the original work by Kac, Murdock, and Szeg\H{o} \cite{KMS, Grenander} and~ remains~true~today~\cite{Bogoya}. 

Our overall purpose is to discuss eigenvalue bifurcations. We discover eigenvalue behaviors whose general characteristics are familiar, as they are ubiquitous in physical applications. For the case of $K_n(\rho)$, our specific goal is to study these behaviors, as well as related concepts, in detail. To do this, we exploit the implicit equations (developed in  \cite{Fik2018}, and containing an additional parameter $\mu\in\mathbb{C}$) that connect the eigenvalues $\lambda$ to the parameter $\rho$.

Let us give some indicative examples of our findings and explain their relevance. We track eigenvalue bifurcations as we move along selected paths in the complex-$\rho$ plane, which pass  through \textsl{critical} or \textsl{exceptional} points $\rho_c$ (we usually prefer the first term). Bifurcations of this sort are significant in problems of, among others, non-Hermitian quantum mechanics \cite{Heiss1}--\cite{Kanki} and optics, photonics, and electromagnetics \cite{Heiss2},  \cite{Figotin}--\cite{Zhiyenbayev}. Exceptional points, which give rise to many applications, occur in nonconservative or open physical systems which can interact with their environments. Moving along a path in the complex-$\rho$ plane corresponds to a certain eigenvalue trajectory in the complex-$\lambda$ plane; for the case where $\rho$ is close to $\rho_c$, we derive an explicit equation for this trajectory, which corresponds to the strong coupling discussed in \cite{Kirilov} and elsewhere. In the physical problems, it is often important whether \textit{all} matrix eigenvalues are real or not; see, e.g., the discussions on broken and unbroken $\mathcal{PT}$-symmetry in \cite{MiriAlu, BenderMakingSense}. While the spectrum of $K_n(\rho)$ is generally not exclusively real, we find that the bifurcating eigenvalues become real or ``nearly real'' \textit{after} bifurcation.   We find explicit expressions for the eigenvectors that coalesce/split at the critical points, and point out that they are isotropic vectors \cite{HornJohnson, Garcia} (often termed self-orthogonal vectors or isotropic null vectors in the physics literature \cite{Moiseyev, Scott}).

The aforementioned eigenvectors are self-orthogonal simply because the matrix $K_n(\rho)$ is complex symmetric \cite{Garcia}.  $K_n(\rho)$ also belongs to other matrix classes frequently investigated today. First, $K_n(\rho)$ is non-Hermitian (unless $\rho\in\mathbb{R}$, a case not of interest to us here). Though non-Hermitian Hamiltonians had been used as early as 1959 \cite{BenderMakingSense, Wu}, the study of non-Hermitian matrices (or operators) accelerated in 1998,  when Bender and Boettcher published their work \cite{BenderBoettcher}; it is indicative that most of the papers \cite{Heiss1}--\cite{BenderMakingSense} include some discussion of non-Hermitian matrices, even if  these are small ($2\times 2$ or $3\times 3$).
Second,  $K_n(\rho)$ is nonnormal (unless $\rho\in\mathbb{R}$ or $n=2$ \cite{Fik2018}, cases not of interest herein); nonnormal matrices and operators are analyzed in, e.g., the comprehensive 2005 book by Trefethen and Embree \cite{Trefethen},  
which is is intimately related to eigenvalue perturbations. Third, all bifurcations we find occur when  $|\rho|>1$, which is precisely when the associated Laurent matrix (i.e., the associated doubly infinite Toeplitz matrix) does not have a well-defined and bounded symbol. As discussed in \cite{Fik2018}, this hinders studies of spectral behavior of $K_n(\rho)$.  Investigations of a class of Toeplitz matrices (albeit Hermitian ones) whose associated Laurent matrices have no symbols can be found in the 2012 work by Bogoya, B\"ottcher, and Grudsky \cite{Bottcher-increasing}. The matrix $K_n(\rho)$, in other words, belongs to several interesting matrix classes.

The studies herein are intervolved with a number of results from the recent work \cite{FikMavr}. As a first example, eigenvalue bifurcations can be seen in Fig.~4 of \cite{FikMavr}, which was generated by computing the eigenvalues numerically. In the present paper, we focus on \textit{a priori} formulas, suitable for predicting bifurcation behaviors without actually finding eigenvalues. As an another example, \cite{FikMavr} introduces the so-called \textsl{type-}$k$ \textsl{borderline curve}, where $k=1,2$. This is a closed curve in the complex-$\rho$ plane that consists of the $\rho$ for which the magnitude of some type-1 or type-2 eigenvalue of $K_n(\rho)$ equals the matrix dimension $n$. In the present paper, we study what we call the \textsl{local level curve}. As we will see, each local level curve approaches a borderline curve; and both these curves are closely connected to eigenvalue bifurcations.

What are type-1 and type-2 eigenvalues? Each eigenvalue of $K_n(\rho)$ is of one of the two types. The most notable distinguishing feature is that type-1 (type-2) eigenvalues are associated with skew-symmetric (symmetric) eigenvectors. More precise information on the two types can be found in Theorem 3.7 of \cite{Fik2018}, Theorem 4.1 of \cite{Fik2018}, and Remark 4.2 of \cite{Fik2018}.

We proceed by discussing the matrix $K_3(\rho)$, i.e., the special case $n=3$, which serves as a motivating introduction to---and an illustrative example for---many of the concepts developed in this paper.

\subsection{Introductory example: The case $n=3$}
\label{section:n-equals-3}

This section examines $K_3(\rho)$ using elementary methods.\footnote{The case $n=4$ can also be studied by elementary methods, because all four eigenvalues are given by explicit formulas that are similar to (\ref{eq:exacteigenvaluefor3}). When $n=5$, like formulas also give the two type-1 eigenvalues that coalesce/split when $\rho=\rho_c=2i$. For $n\ge 3$, these seem to be the only cases that are amenable to analysis by elementary methods of this sort.} For brevity, we also use a result from \cite{FikMavr} relevant to level curves.

For any $\rho\in \mathbb{C}\setminus\{-1,0,1\}$, it is easily found that $K_3(\rho)$ has the type-1 eigenvalue $1-\rho^2$ and that the other two eigenvalues are of type-2 and equal to
\begin{equation}
\label{eq:exacteigenvaluefor3}
\lambda(\rho)=\frac{1}{2}\left(2+\rho^2\pm\rho\sqrt{\rho^2+8}\right)
\end{equation}
where, consistently with what is to follow, we use the symbol $\lambda(\rho)$ for \textit{two} eigenvalues. We see that  repeated eigenvalues occur (apart from the excluded cases $\rho=0$ and $\rho=\pm 1$) iff $\rho$ assumes the two critical values $\rho=\rho_c=\pm i\sqrt{8}$, which are branch points of the double-valued function $\lambda(\rho)$ given by (\ref{eq:exacteigenvaluefor3}). Both repeated eigenvalues are (algebraically) double eigenvalues equal to $-3$. Thus the matrix $K_3\left(i\sqrt{8}\right)$ possesses the type-2 double eigenvalue
$\lambda_c=-3$; it is then easy to verify that the geometric multiplicity of this double eigenvalue is 1, and that it is associated with the eigenvector $\textit{\textbf{v}}_c$, where $\textit{\textbf{v}}_c^T=(1,-i\sqrt{2},1)$. Similarly, $K_3\left(-i\sqrt{8}\right)$ possesses the type-2 double eigenvalue
$\lambda_c=-3$, having geometric multiplicity 1 and associated with the eigenvector $\textit{\textbf{v}}_c$, where $\textit{\textbf{v}}_c^T=(1,i\sqrt{2},1)$.

With $\rho=iy$, (\ref{eq:exacteigenvaluefor3}) becomes
\begin{equation}
\label{eq:exacteigenvaluefor3-in-terms-of-y}
\lambda(\rho)=\lambda(iy)=\frac{1}{2}\left[2-y^2\pm y\sqrt{\left(y-\sqrt{8}\right)\left(y+\sqrt{8}\right)}\right].
\end{equation}
Let us track the behavior of our two type-2 eigenvalues as we start at the origin and move upwards along the imaginary axis, so that $y>0$. Initially, the two eigenvalues have nonzero imaginary parts and are complex conjugates. After passing the point $\rho_c=i\sqrt{8}$, however, the two split (from their common value $\lambda_c=-3$) and remain real. In fact, for $d=y-\sqrt{8}>0$, the right-hand side of  (\ref{eq:exacteigenvaluefor3-in-terms-of-y}) can be expanded into a convergent power series in the variable $\sqrt{d}$. Specifically,
\begin{equation}
\label{eq:puiseuxfor3}
\frac{\lambda(\rho)}{\lambda_c}=\frac{\lambda(\rho)}{-3}=1\mp\frac{2^{1/4}\sqrt{8}}{3}\sqrt{d}+\frac{\sqrt{8}}{3} d+O\left(d^{3/2}\right),\quad \mathrm{as}\quad d\to 0+0.
\end{equation}
Thus, upon passing $\rho_c$, the eigenvalues exhibit a square-root behavior. Similar behavior occurs if we move downwards from the origin and pass the other critical point ($\rho_c=-i\sqrt{8}$).
\begin{figure}
\begin{center}
\includegraphics[scale=0.4]{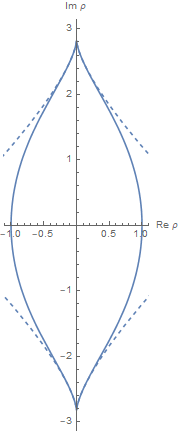}
\end{center}
\caption{Introductory case $n=3$: Type-2 borderline curve $B_3^{(2)}$ (solid line) together with the two local level curves (two sets of dashed lines) associated with the critical points $\rho_c=\pm i\sqrt{8}$.} \label{fig:Fig1}
\end{figure}

Let us return to complex $\rho$.
We have $|\lambda\left(i\sqrt{8}\right)|=|\lambda\left(-i\sqrt{8}\right)|=3$. Therefore, in the complex $\rho$-plane, the level curve (contour line) defined by the equation $|\lambda(\rho)|=3$ passes through the two critical points $\rho_c$. An implicit equation for this level curve can be easily found from (\ref{eq:exacteigenvaluefor3}). The level curve can also be determined by the methods of \cite{FikMavr}, where it is called the \textsl{type-2 borderline curve} and is denoted by $B_n^{(2)}=B_3^{(2)}$. The curve $B_3^{(2)}$, shown as the solid line in Fig. \ref{fig:Fig1}, is symmetric with respect to the imaginary axis and is seen to have cusp-like singularities at the two critical points.\footnote{The type-1 eigenvalue $1-\rho^2$ is similarly associated with the level curve $|1-\rho^2|=3$, denoted in \cite{FikMavr} by $B_3^{(1)}$. This curve is a single-loop Cassini oval. But for all $n\ge 3$, it is unique among the borderline curves $B_n^{(k)}$ because it has no cusp-like singularities \cite{FikMavr}. It is not of interest to us here.} Therefore the imaginary axis---which we related to the above-described bifurcations---can be viewed  as the unique straight line that: (i) bisects the two cusp-like singularities;  and (ii) is tangent to the two branches of $B_3^{(2)}$ (see also \cite{FikMavr}). Consequently, when we wish to observe eigenvalue bifurcations by moving along a straight line, the imaginary  axis is the natural line to choose.

In (\ref{eq:puiseuxfor3}) we have written the first three terms of the \textsl{Puiseux series} of the double-valued function $\lambda(\rho)$. Such a series makes sense (as a convergent series, with an infinite number of terms) for any $\rho\in\mathbb{C}$ sufficiently close to $\rho_c$, and consists of integer powers of $\epsilon^{1/2}$, where $\epsilon=\rho-\rho_c=\rho-i\sqrt{8}$. 

\subsection{Purpose of this paper} 
\label{section:purpose}
In this paper, we build upon the Puiseux series and on certain results from \cite{Fik2018} and \cite{FikMavr} (the said results are reviewed in Section \ref{section:preliminaries}) in order to discuss bifurcations and related concepts associated with $K_n(\rho)$, for all $n\ge 3$. We manage to explicitly determine only the first three terms of the Puiseux series, something that limits our formulas' numerical accuracies. Retaining three terms, however, does allow us to explicitly compute our ``local level curves'' and to draw a number of conclusions about eigenvalue behavior

As we will see in Section \ref{section:preliminaries}, all nontrivial repeated eigenvalues of $K_n(\rho)$ are double eigenvalues equal to $-n$. Initially (Sections \ref{section:analytic-matrix-functions}--\ref{section:puiseux-for-KMS}) we consider \textit{any} $\rho_c\in\mathbb{C}$ for which such a double eigenvalues occurs. We then (Section \ref{section:purely-imaginary-rho}) focus on critical points $\rho_c=iy_{n}$ that lie on the imaginary-$\rho$ axis; in this case, most results simplify considerably, especially (Section \ref{section:large-n}) when the matrix dimension $n$ becomes large. (Except for Section \ref{section:large-n}, in this paper $n$ is arbitrary but fixed.)

As a rule, double eigenvalues of a square matrix $A_n(\rho)$ are associated with Puiseux series consisting of powers of $\epsilon^{1/2}=(\rho-\rho_c)^{1/2}$, provided that $A_n(\rho)$ is an analytic matrix function, meaning that the matrix entries are analytic functions of $\rho\in\mathbb{C}$ \cite{Lancaster}--\cite{Kato}. For this reason, we have formulated some of our results (Sections \ref{section:analytic-matrix-functions} and \ref{section:puiseux-for-analytic}) so as to be more generally applicable. Section \ref{section:puiseux-for-analytic}, especially, contains formulas for the first three terms of the Puiseux series of an analytic matrix function $A_n(\rho)$ for which there exists an implicit relation---via analytic functions $\lambda(\mu)$ and $\rho(\mu)$ of a new parameter $\mu$---between $\rho$ and the eigenvalues $\lambda$. This assumption renders the formulas in Section \ref{section:puiseux-for-analytic}---which are limited to the case of a double eigenvalue---different from a number of formulas in the literature giving the first few terms of Puiseux series. For such formulas, whose starting point is often the Jordan structure of the matrix, see \cite{Kirilov, Lancaster}, \cite{Vishik}--\cite{Moro}, the discussions in pp. 477--478 of \cite{Trefethen}, and the comprehensive analysis and bibliography in \cite{Welters}. 

As seen in Fig. \ref{fig:Fig1}, $B_3^{(2)}$ occupies a finite region in the complex-$\rho$ plane, something that remains true for any $B_n^{(k)}$ \cite{FikMavr}. A borderline curve must thus be regarded as a global property of $K_n(\rho)$. Throughout, we  are concerned with \textit{local} rather than global properties.  
In particular, we revise and generalize the ``local level curve'' that was used (specifically for the case of $K_n(\rho)$) in \cite{FikMavr}. For the case $n=3$, the two local level curves (one associated with $\rho_c=+i\sqrt{8}$, the other with $\rho_c=-i\sqrt{8}$, and both computed by methods to be developed in the present paper) are shown as the dashed lines in  Fig. \ref{fig:Fig1}.

\subsection{Notes on notation}
\label{section:notation}

Throughout, ``analytic'' function always means complex-analytic function. The two symbols $\sqrt{z}$ and $z^{1/2}$ can denote either the single- or the double-valued square root of $z\in\mathbb{C}$; when not clarified, the meaning should be clear from the context. When we wish to emphasize that two values are meant, we use a plus/minus sign and write $\pm \sqrt{z}$ or $\pm z^{1/2}$ (we already did this in Section \ref{section:n-equals-3}).

Most equations for type-1 and type-2 eigenvalues differ only by sign changes. Thus, equations pertaining to both types can be compactly written with the aid of a plus/minus sign. To distinguish from the  aforementioned (ordinary) symbols $\pm$ and $\mp$, we use the special symbols $\typepm$ and $\typemp$. Eqn. (\ref{eq:rho-prime-in-terms-of-mu}), for example, should be interpreted as the following two,
\begin{equation*}
\rho^{\prime}(\mu)=\frac{-(\lambda(\mu)+n)\sin\mu}{1- \cos[(n-1)\mu]},\qquad
\rho^{\prime}(\mu)=\frac{-(\lambda(\mu)+n)\sin\mu}{1+\cos[(n-1)\mu]},
\end{equation*}
where the first (second) equation holds for type-1 (type-2) eigenvalues.

For $k=0,1,2,\ldots$, $T_k(t)$ and $U_k(t)$ denote the usual Chebyshev polynomials of the first and second kinds, given by
\begin{equation}
\label{eq:chebyshev}
T_k(t)=\cos\left(k\mathrm{Arccos}\,t\right),\quad U_k(t)=\frac{\sin[(k+1)\mathrm{Arccos} t]}{\sin(\mathrm{Arccos} t)},\quad t\in\mathbb{C}.
\end{equation}
Appendix \ref{app:b} lists some useful  to us properties of $T_k(t)$ and $U_k(t)$.

Theorem \ref{th:preliminary-t}  further uses the symbols $T_k(t)$ and $U_k(t)$ for $k=\frac{1}{2},\frac{3}{2},\frac{5}{2}\ldots$. In this case, $T_k(t)$ and $U_k(t)$ must be interpreted via (\ref{eq:chebyshev}). (It is easily verified that the multivaluedness of $\mathrm{Arccos}\,t$ does not affect Theorem \ref{th:preliminary-t}.)

Finally, RHS stands for right-hand side (of an equation).

\section{Results from \cite{Fik2018} and \cite{FikMavr}; corollaries}
\label{section:preliminaries}

This section gives our starting points, which are certain results from \cite{Fik2018} and \cite{FikMavr}, as well as some corollaries of those results.  Our first result is Lemma 2.2 of \cite{FikMavr} (for the case of odd $n$ only):

\begin{lemma}
\label{lemma:symmetries}
\cite{FikMavr}
Let $\rho\in\mathbb{C}$ and $n=3,5,7,\ldots$. Let $\lambda$ be a type-1 (or type-2) eigenvalue of $K_n(\rho)$. Then $\lambda$ is a type-1 (or type-2) eigenvalue of $K_n(-\rho)$. Furthermore, the complex conjugate $\overline{\lambda}$ is a type-1 (or type-2) eigenvalue of $K_n(\overline\rho)$.
\end{lemma}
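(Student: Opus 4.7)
The plan is to establish both claims via explicit similarity transformations that relate the spectra of $K_n(\rho)$, $K_n(-\rho)$, and $K_n(\overline\rho)$, and then to verify that these transformations respect the (skew-)symmetric structure of eigenvectors that, by Theorem~3.7 and Remark~4.2 of \cite{Fik2018}, distinguishes the two eigenvalue types.

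For the complex-conjugation assertion I would simply observe that $K_n(\overline\rho)$ is the entrywise complex conjugate of $K_n(\rho)$, since every entry is a nonnegative integer power of $\rho$. Conjugating the eigenvalue equation $K_n(\rho)v=\lambda v$ therefore gives $K_n(\overline\rho)\overline v=\overline\lambda\,\overline v$, and because entrywise conjugation commutes with the index reversal $j\mapsto n+1-j$, the vector $\overline v$ is symmetric (resp.\ skew-symmetric) whenever $v$ is. The eigenvalue type is thus preserved, with no restriction on parity of $n$.

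For the $\rho\mapsto-\rho$ assertion, the key trick is the ``checkerboard'' diagonal matrix $D=\mathrm{diag}\bigl((-1)^{j-1}\bigr)_{j=1}^n$. A one-line entrywise check exploiting the identity $(-1)^{|j-k|}=(-1)^{(j-1)+(k-1)}$ gives $D K_n(\rho) D=K_n(-\rho)$, and $D^{-1}=D$. Hence $K_n(\rho)v=\lambda v$ forces $K_n(-\rho)(Dv)=\lambda(Dv)$, exhibiting $\lambda$ as an eigenvalue of $K_n(-\rho)$ with eigenvector $Dv$.

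The remaining and only delicate point---and where the hypothesis that $n$ is odd enters---is to verify that $Dv$ inherits the (skew-)symmetry of $v$. This reduces to the equality $D_{jj}=D_{(n+1-j)(n+1-j)}$, i.e., $(-1)^{j-1}=(-1)^{n-j}$, which holds precisely when $n$ is odd. Once this is secured, invoking the eigenvector-type characterization from \cite{Fik2018} finishes the proof. I do not expect any real obstacle beyond this parity bookkeeping; both similarity relations $K_n(\overline\rho)=\overline{K_n(\rho)}$ and $D K_n(\rho) D=K_n(-\rho)$ are routine consequences of the Toeplitz structure of $K_n(\rho)$.
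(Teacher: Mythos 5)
Your proof is correct, and it takes a more elementary route than the one the paper's framework suggests. The paper only cites \cite{FikMavr} for this lemma without reproducing a proof, but the natural argument within the paper's machinery would go through the implicit $\mu$-parameterization of Theorem~\ref{th:preliminary-mu}: one checks that for odd $n$ the substitution $\mu\mapsto\pi-\mu$ sends $\rho(\mu)\mapsto-\rho(\mu)$ while leaving $\lambda(\mu)$ fixed (both for the type-1 and type-2 branches, using that $(n\pm1)/2$ are integers of opposite parity), and that $\mu\mapsto\overline\mu$ sends $\rho\mapsto\overline\rho$ and $\lambda\mapsto\overline\lambda$ since the defining functions have real coefficients. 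Your argument instead works directly at the matrix level: the identities $K_n(\overline\rho)=\overline{K_n(\rho)}$ and $DK_n(\rho)D=K_n(-\rho)$ with $D=\mathrm{diag}\bigl((-1)^{j-1}\bigr)$ are both immediate from the Toeplitz form, and the parity check $D_{jj}=D_{(n+1-j)(n+1-j)}\iff n$ odd correctly isolates the only place the hypothesis on $n$ is used; combined with the (skew-)symmetric eigenvector characterization from Theorems~3.7/4.1 and Remark~4.2 of \cite{Fik2018}, this settles both type assertions. The two approaches buy different things: the $\mu$-substitution argument stays entirely inside the implicit-equation framework that the rest of the paper lives in and transfers painlessly to related statements phrased in $\mu$ or $t$; your similarity argument is shorter, requires no trigonometric bookkeeping, and makes transparent exactly why even $n$ must be excluded (for even $n$ the conjugation by $D$ swaps the two symmetry types rather than preserving them). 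Both are sound; yours is arguably the cleaner self-contained proof of this particular lemma.
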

Lemma \ref{lemma:symmetries} immediately gives the result that follows. It shows that, under certain conditions, the non-Hermitian, complex-symmetric matrix $K_n(\rho)$  possesses a property usually associated with matrices whose entries are real.
\begin{corollary}
\label{corollary:symmetries} Let $\rho=iy$ where 
$y\in\mathbb{R}$, and let $n=3,5,7,\ldots$. If $\lambda\in\mathbb{C\setminus R}$ is an eigenvalue of $K_n(iy)$, then $\overline{\lambda}$ is also an eigenvalue of  $K_n(iy)$. Furthermore, $\lambda$ and $\overline{\lambda}$ are eigenvalues of the same type (1 or 2).
\end{corollary}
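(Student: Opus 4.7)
The plan is to obtain the corollary as a direct consequence of Lemma \ref{lemma:symmetries}, using the simple but crucial observation that when $\rho=iy$ is purely imaginary (with $y\in\mathbb{R}$), we have $\overline{\rho}=-iy=-\rho$. In other words, for such $\rho$, the two symmetries asserted by Lemma \ref{lemma:symmetries} — namely $\rho\mapsto -\rho$ and $\rho\mapsto \overline{\rho}$ — yield conclusions about the same matrix $K_n(-iy)$, so they can be chained to close a loop back to $K_n(iy)$.

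Concretely, given a type-$k$ eigenvalue $\lambda$ of $K_n(iy)$ (where $k\in\{1,2\}$), I would first apply the second (conjugation) part of Lemma \ref{lemma:symmetries} with $\rho=iy$, obtaining that $\overline{\lambda}$ is a type-$k$ eigenvalue of $K_n(\overline{iy})=K_n(-iy)$. Next I would apply the first (sign-reversal) part of Lemma \ref{lemma:symmetries} to this eigenvalue of $K_n(-iy)$; since $-(-iy)=iy$, this shows that $\overline{\lambda}$ is a type-$k$ eigenvalue of $K_n(iy)$. Both assertions of the corollary — the existence of the conjugate eigenvalue and the preservation of type — then fall out at once.

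I do not foresee any substantive obstacle: this is a genuinely immediate corollary, essentially a one-line chase through the two parts of Lemma \ref{lemma:symmetries}. The hypothesis $\lambda\in\mathbb{C}\setminus\mathbb{R}$ plays no role in the derivation; it is included in the statement only to highlight the nontrivial content, since for real $\lambda$ the conclusion $\overline{\lambda}=\lambda$ is vacuous. The only hypothesis that is actually used is the oddness of $n$, and this is essential purely because Lemma \ref{lemma:symmetries} is formulated only for odd $n$.
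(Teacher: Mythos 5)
Your proposal is correct and matches the paper's approach: the paper simply asserts that Lemma \ref{lemma:symmetries} "immediately gives" the corollary, and the chaining you describe (conjugation to pass from $K_n(iy)$ to $K_n(-iy)$, then sign reversal to return to $K_n(iy)$, tracking the type through both steps) is exactly the intended one-line argument. Your remark that the hypothesis $\lambda\notin\mathbb{R}$ is there only to exclude the vacuous case is also apt.
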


The theorem that follows comes from Lemma 2.3 of \cite{FikMavr}. It gives the aforementioned implicit equations relating $\lambda$ and $\rho$.

\begin{theorem}\label{th:preliminary-mu} \cite{Fik2018}, \cite{FikMavr}
Let $\rho \in \mathbb{C}$. $\lambda \in \mathbb{C}$ is a type-1 (or type-2) eigenvalue of $K_n(\rho)$ iff
\begin{equation}\label{eq:lambda-of-mu}
\lambda=-\frac{\sin(n\mu)}{\sin{\mu}} \quad \left( {\rm or}\,\,\lambda=\frac{\sin(n\mu)}{\sin{\mu}}  \right),
\end{equation}
where $\mu\in \mathbb{C}$ satisfies
\begin{equation}\label{eq:rho-of-mu}
\rho=\frac{\sin{\frac{(n+1)\mu}{2}}}{\sin{\frac{(n-1)\mu}{2}}} \quad \left({\rm or}\,\,  \rho=\frac{\cos{\frac{(n+1)\mu}{2}}}{\cos{\frac{(n-1)\mu}{2}}}\right).
\end{equation}
\end{theorem}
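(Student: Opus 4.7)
The plan is to convert the eigenvalue problem $K_n(\rho)\textit{\textbf{v}}=\lambda\textit{\textbf{v}}$ into a three-term linear recurrence on the entries of $\textit{\textbf{v}}$, introduce $\mu$ via the resulting dispersion relation, and then read off $\rho(\mu)$ from the boundary conditions after using the persymmetry of $K_n(\rho)$ to split eigenvectors into skew-symmetric and symmetric classes (which, by Theorem~3.7 of \cite{Fik2018}, are precisely the type-1 and type-2 eigenvectors).

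The first step is a purely algebraic identity: for any $\textit{\textbf{v}}\in\mathbb{C}^n$ and any $2\le j\le n-1$, direct computation yields
\[
\rho(K_n\textit{\textbf{v}})_{j-1}-(1+\rho^2)(K_n\textit{\textbf{v}})_{j}+\rho(K_n\textit{\textbf{v}})_{j+1}=-(1-\rho^2)v_j,
\]
together with the endpoint relations $(K_n\textit{\textbf{v}})_1-\rho(K_n\textit{\textbf{v}})_2=(1-\rho^2)v_1$ and $(K_n\textit{\textbf{v}})_n-\rho(K_n\textit{\textbf{v}})_{n-1}=(1-\rho^2)v_n$ (these collectively encode the tridiagonality of $K_n(\rho)^{-1}$). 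Imposing $K_n(\rho)\textit{\textbf{v}}=\lambda\textit{\textbf{v}}$ converts the interior identity into $v_{j-1}+v_{j+1}=(2\cos\mu)v_j$ once $\mu\in\mathbb{C}$ is defined by
\[
2\rho\lambda\cos\mu=(1+\rho^2)\lambda-(1-\rho^2),
\]
and rewrites each endpoint relation as $[\lambda-(1-\rho^2)]v_1=\rho\lambda v_2$, together with its analogue at $j=n$. A short rearrangement of the dispersion relation supplies the useful identity $\lambda-(1-\rho^2)=\rho\lambda(2\cos\mu-\rho)$.

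Next I would invoke persymmetry and try the skew-symmetric Ansatz $v_j=\sin\bigl((j-\tfrac{n+1}{2})\mu\bigr)$ for type 1. This automatically solves the interior recurrence and makes the two boundary relations equivalent. Substituting in the $j=1$ relation, eliminating $\lambda-(1-\rho^2)$ via the dispersion relation, and expanding by $2\cos\mu\sin((n-1)\mu/2)=\sin((n+1)\mu/2)+\sin((n-3)\mu/2)$ collapses everything to $\rho\sin((n-1)\mu/2)=\sin((n+1)\mu/2)$, which is the type-1 half of (\ref{eq:rho-of-mu}). To extract (\ref{eq:lambda-of-mu}) I would apply $\sin^2A-\sin^2B=\sin(A+B)\sin(A-B)$ to simplify $1-\rho^2$, compute $1+\rho^2-2\rho\cos\mu=\sin^2\mu/\sin^2((n-1)\mu/2)$ by a direct expansion, and then substitute into $\lambda=(1-\rho^2)/(1+\rho^2-2\rho\cos\mu)$ to obtain $\lambda=-\sin(n\mu)/\sin\mu$. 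The symmetric Ansatz $v_j=\cos\bigl((j-\tfrac{n+1}{2})\mu\bigr)$ delivers the type-2 formulas by the entirely parallel calculation with cosines replacing sines throughout.

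The converse direction is the remark that, given $\mu$ satisfying (\ref{eq:rho-of-mu}), the Ansatz above produces a nonzero $\textit{\textbf{v}}$ which, by the same identities read backwards, satisfies $K_n(\rho)\textit{\textbf{v}}=\lambda\textit{\textbf{v}}$. The principal technical nuisance I anticipate is the bookkeeping at exceptional $\mu$ where $\sin\mu$, $\sin((n-1)\mu/2)$ or $\cos((n-1)\mu/2)$ vanishes, or $\rho=0$: at such values the parametrisation and the dispersion relation degenerate, so either a limiting argument or a separate direct verification is needed to confirm that the resulting vector remains a genuine nonzero eigenvector of the stated type.
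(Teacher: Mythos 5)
The paper does not prove this statement: Theorem~\ref{th:preliminary-mu} is imported verbatim from \cite{Fik2018} (Theorem~3.7/4.1 there) and \cite{FikMavr} (Lemma~2.3), so there is no in-text argument to compare yours against. That said, your proof sketch is sound, and it is in the same spirit as the machinery of \cite{Fik2018} that the paper summarizes (the skew-symmetric/symmetric eigenvector split and the explicit $\sin/\cos$ eigenvector formulas of Theorem~\ref{th:eigenvectors} are exactly what your Ansatz produces). I checked the load-bearing identities: the three-term relation $\rho u_{j-1}-(1+\rho^2)u_j+\rho u_{j+1}=-(1-\rho^2)v_j$ for $u=K_n v$, the endpoint relations $u_1-\rho u_2=(1-\rho^2)v_1$ and its mirror, the rearrangement $\lambda-(1-\rho^2)=\rho\lambda(2\cos\mu-\rho)$, and the reductions $1+\rho^2-2\rho\cos\mu=\sin^2\mu/\sin^2\!\bigl(\tfrac{n-1}{2}\mu\bigr)$ and $1-\rho^2=-\sin(n\mu)\sin\mu/\sin^2\!\bigl(\tfrac{n-1}{2}\mu\bigr)$ for type~1 (and the cosine analogues for type~2) all hold and do deliver \eqref{eq:rho-of-mu} and \eqref{eq:lambda-of-mu}.

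Two points to tighten. First, the ``only if'' direction needs one explicit sentence you left implicit: persymmetry of $K_n(\rho)$ lets you take each eigenvector symmetric or skew-symmetric, and then the general solution $v_j=C\sin\!\bigl((j-\tfrac{n+1}{2})\mu\bigr)+D\cos\!\bigl((j-\tfrac{n+1}{2})\mu\bigr)$ of the interior recurrence reduces (by that symmetry) to exactly one of your two Ans\"atze, so no eigenvector is missed. Second, you correctly flag the degenerate cases, but it is worth noting that most of them are already harmless: $\rho\in\{-1,0,1\}$ is excluded in the paper's statements; $\lambda=0$ cannot occur when $\rho\ne\pm1$ since $\det K_n(\rho)=(1-\rho^2)^{n-1}$; and the remaining degeneracies $\sin\mu=0$ or $\cos\mu=\pm1$ correspond to $\mu\in\pi\mathbb{Z}$, which are precisely the values excluded in Theorem~\ref{th:eigenvectors} and handled separately in \cite{Fik2018}. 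A single limiting argument at those isolated $\mu$ closes the gap; with that, your proof is complete.
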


Theorem \ref{th:preliminary-mu} naturally lends itself to the study of repeated eigenvalues:  Differentiating (\ref{eq:rho-of-mu}) with respect to $\mu$ and invoking (\ref{eq:lambda-of-mu}) we obtain
\begin{equation}
\label{eq:rho-prime-in-terms-of-mu}
\rho^{\prime}(\mu)=\frac{-(\lambda(\mu)+n)\sin\mu}{1\typemp \cos[(n-1)\mu]}
\end{equation}
(see Section \ref{section:notation} for the meaning of $\typemp$). Apart from certain trivial cases, (\ref{eq:rho-prime-in-terms-of-mu}) shows that the conditions $\rho^\prime(\mu)=0$ and $\lambda(\mu)=-n$ are equivalent. Either condition signals the appearance of a repeated eigenvalue, which is always a double eigenvalue and is necessarily equal to $-n$.

Repeated eigenvalues are discussed more precisely in the theorem that follows, which is\footnote{Eqn. (\ref{eq:rho-of-t-type-1}) is exceptional because, at first sight, it appears different from the corresponding equation in \cite{Fik2018} (see (4.14) of \cite{Fik2018}). To show our (\ref{eq:rho-of-t-type-1}), set $\mu=\mathrm{Arccos}\, t_0$ in (4.20) of \cite{Fik2018}, solve for $\rho$, and use our (\ref{eq:chebyshev}).} Theorem 4.5 of \cite{Fik2018}. Our theorem is expressed in terms of a parameter $t$, which is related to the parameter $\mu$ via
\begin{equation}
t=\cos\mu.
\end{equation}

\begin{theorem}
\label{th:preliminary-t}
\cite{Fik2018}
Let $\rho\in\mathbb{C}\setminus\{-1,0,1\}$ and $n=3,4,5,\ldots$. If $\lambda=\lambda_c$ is a repeated eigenvalue of $K_n(\rho)$, then it is a double eigenvalue with
\begin{equation}
\label{eq:lambdac-equals-minus-n}
\lambda_c=-n.
\end{equation}
Moreover, $\lambda_c=-n$ is a double eigenvalue of type-1 iff $\rho$ is equal to a value $\rho_c$ given by
\begin{equation}
\label{eq:rho-of-t-type-1}
\rho_c=\frac{U_{(n-1)/2}(t_c)}{U_{(n-3)/2}(t_c)},
\end{equation}
where $t_c\in\mathbb{C}$ is any zero of the polynomial
\begin{equation*}
q_1(n,t)=
\begin{cases}
\frac{U_{n-1}(t)-n}{t-1},\quad n=4,6,8,\ldots,\\
\frac{U_{n-1}(t)-n}{t^2-1},\quad n=5,7,9,\ldots.
\end{cases}
\end{equation*}
Similarly, $\lambda_c=-n$ is a double eigenvalue of type-2 iff $\rho$ is equal to a value $\rho_c$ given by
\begin{equation}
\label{eq:rho-of-t-type-2}
\rho_c=\frac{T_{(n+1)/2}(t_c)}{T_{(n-1)/2}(t_c)}.
\end{equation}
In this case, $t_c\in\mathbb{C}$ is any zero of the polynomial
\begin{equation*}
q_2(n,t)=
\begin{cases}
\frac{U_{n-1}(t)+n}{t+1},\quad n=4,6,8,\ldots,\\
U_{n-1}(t)+n,\quad n=3,5,7,\ldots.
\end{cases}
\end{equation*}
\end{theorem}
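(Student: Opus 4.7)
My plan is to derive everything from the implicit parametrization of Theorem \ref{th:preliminary-mu}. Differentiating $\rho(\mu)$ in (\ref{eq:rho-of-mu}) and using (\ref{eq:lambda-of-mu}) produces (\ref{eq:rho-prime-in-terms-of-mu}); whenever $\sin\mu \neq 0$ and the denominator $1 \typemp \cos[(n-1)\mu]$ does not simultaneously vanish, the condition $\rho'(\mu) = 0$ is equivalent to $\lambda(\mu) = -n$. Because both $\rho$ and $\lambda$ are invariant under the reflection $\mu \leftrightarrow -\mu$, the natural variable is $t = \cos\mu$, and a repeated eigenvalue at $\rho = \rho_c$ corresponds to two values of $t$ coalescing at a common $t_c$, which in turn forces $\rho'(\mu_c) = 0$ at the corresponding $\mu_c$ with $\sin\mu_c \neq 0$. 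This immediately gives $\lambda_c = \lambda(\mu_c) = -n$, proving (\ref{eq:lambdac-equals-minus-n}).

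For the exact multiplicity, I would differentiate (\ref{eq:rho-prime-in-terms-of-mu}) once more, use $\lambda(\mu_c) = -n$, and check that $\rho''(\mu_c) \neq 0$; the local Taylor expansion $\rho(\mu) - \rho_c = \tfrac{1}{2}\rho''(\mu_c)(\mu - \mu_c)^2 + O((\mu - \mu_c)^3)$ then shows that exactly two holomorphic branches $\mu(\rho)$ meet at $\mu_c$, so $\lambda(\rho)$ has a square-root Puiseux expansion and $\lambda_c$ is an algebraic double eigenvalue. To obtain the formulas for $\rho_c$, I substitute $\lambda_c = -n$ into (\ref{eq:lambda-of-mu}) and use $\sin(n\mu)/\sin\mu = U_{n-1}(\cos\mu)$, yielding $U_{n-1}(t_c) = n$ for type-1 and $U_{n-1}(t_c) = -n$ for type-2. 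The expressions (\ref{eq:rho-of-t-type-1}) and (\ref{eq:rho-of-t-type-2}) then follow by rewriting the right-hand sides of (\ref{eq:rho-of-mu}) via $\sin((k+1)\mu)/\sin\mu = U_k(\cos\mu)$ and $\cos(k\mu) = T_k(\cos\mu)$ (extended to half-integer $k$ as the paper allows).

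It remains to strip off the spurious roots at $t_c = \pm 1$, corresponding to $\mu_c \in \{0,\pi\}$. From $U_{n-1}(\pm 1) = (\pm 1)^{n-1} n$ one checks that $t=1$ is always a root of $U_{n-1}(t) - n$, that $t=-1$ is also a root precisely when $n$ is odd, and similarly $t=-1$ is a root of $U_{n-1}(t) + n$ precisely when $n$ is even---exactly the parities that define $q_1(n,t)$ and $q_2(n,t)$. Each such $\mu_c$ is a fixed point of the reflection $\mu \leftrightarrow -\mu$ at which $\sin\mu$ and the denominator of (\ref{eq:rho-prime-in-terms-of-mu}) both vanish, so the equivalence of $\rho'(\mu) = 0$ with $\lambda(\mu) = -n$ breaks down; a short local expansion confirms that $\rho(t)$ is regular there with $d\rho/dt \neq 0$, so no genuine bifurcation occurs. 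Dividing out these trivial factors produces $q_1$ and $q_2$. The step I expect to be the main obstacle is this last bookkeeping, specifically the parity-dependent case analysis that verifies no legitimate critical point is lost by removing the factors $(t-1)$, $(t+1)$, or $(t^2-1)$.
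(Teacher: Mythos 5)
The paper does not actually prove this theorem: it is cited verbatim as Theorem 4.5 of \cite{Fik2018}, and the only ``proof work'' in the present paper is the footnote explaining how (\ref{eq:rho-of-t-type-1}) is obtained from the form of the result in \cite{Fik2018} by setting $\mu=\mathrm{Arccos}\,t_0$ and invoking (\ref{eq:chebyshev}). So your derivation is new relative to this paper, and the spirit of your approach---deriving everything from the $\mu$-parametrization of Theorem \ref{th:preliminary-mu} and from $\rho'(\mu)$ in (\ref{eq:rho-prime-in-terms-of-mu})---is exactly the technique the paper uses elsewhere (e.g.\ in proving Theorem \ref{th:gamma-delta}). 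Your Chebyshev conversions and the parity bookkeeping are correct: $U_{n-1}(1)=n$ and $U_{n-1}(-1)=(-1)^{n-1}n$, which is precisely what produces the divisors $(t-1)$, $(t+1)$, $(t^2-1)$ in $q_1$ and $q_2$.

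The genuine gap is the very first step, where you assert that ``a repeated eigenvalue at $\rho=\rho_c$ corresponds to two values of $t$ coalescing at a common $t_c$, which in turn forces $\rho'(\mu_c)=0$.'' This is the crux and it is asserted, not proved. A repeated eigenvalue (algebraic multiplicity $\geq 2$) can \emph{a priori} arise in two other ways that your sketch does not exclude: (i) two \emph{distinct} parameter values $\mu_1\neq\mu_2$ (equivalently $t_1\neq t_2$) of the \emph{same} type producing the same $\rho$ and the same $\lambda$, i.e.\ a node of the parametrized curve $\mu\mapsto(\rho(\mu),\lambda(\mu))$ rather than a tangency with the line $\rho=\mathrm{const}$; and (ii) a type-1 and a type-2 eigenvalue coinciding at the same $\rho$. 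Only the tangency case forces $\rho'(\mu_c)=0$ and hence $\lambda_c=-n$. Ruling out (i) and (ii), or showing that they occur only at the excluded values $\rho\in\{-1,0,1\}$ (for instance, $\rho=0$, $\lambda=1$ is exactly such a node when $n=3$, coming from $t=\pm 1/\sqrt 2$), is the substantive content of the cited theorem and is absent from your proposal. Relatedly, you flag the removal of $t_c=\pm 1$ as ``the main obstacle,'' but that part really is routine once coalescence is established; the hard step is the one you took for granted. You also rely on $\rho''(\mu_c)\neq 0$ to get multiplicity exactly two, which is plausible but likewise unverified.
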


We call the various $t_c$ and $\rho_c$ the \textsl{critical values} of the parameters $t$ and $\rho$. We use the same term for the $\mu_c$ defined via
\begin{equation}
\label{eq:tc-mc}
t_c=\cos\mu_c.
\end{equation}

The theorem that follows, which gives  $\lambda$-eigenvectors in terms of $\mu$, comes from Theorem 4.1 of \cite{Fik2018}.
\begin{theorem}
\label{th:eigenvectors}
\cite{Fik2018}
Let $\rho\in\mathbb{C}\setminus\{-\frac{n+1}{n-1},-1,1,\frac{n+1}{n-1}\}$ and 
let $\lambda$ be an eigenvalue of $K_n(\rho)$ found via Theorem \ref{th:preliminary-mu}, with $\mu\ne 0, \pm \pi, \pm 2\pi,\ldots$. If $\lambda$ is of type-1,  then it is associated with an eigenvector $\textbf{v}\in \mathbb{C}^n$ whose elements $v_j$ are 
\begin{equation*}
v_j=\sin\left[\mu\left(j-\frac{n-1}{2}\right)\right],\quad j=0,1,\ldots,n-1.
\end{equation*}
If $\lambda$ is of type-2, it is associated with an eigenvector $\textbf{v}\in \mathbb{C}^n$ whose elements $v_j$ are
\begin{equation*}
v_j=\cos\left[\mu\left(j-\frac{n-1}{2}\right)\right],\quad j=0,1,\ldots,n-1.
\end{equation*}
\end{theorem}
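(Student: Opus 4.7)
My plan is to verify $K_n(\rho)\textbf{v}=\lambda\textbf{v}$ directly, via the well-known tridiagonalization of $K_n(\rho)^{-1}$. For $\rho\ne\pm 1$ one checks by direct multiplication that $(1-\rho^2)K_n(\rho)^{-1}=T$, where $T$ is the symmetric tridiagonal matrix with diagonal $(1,1+\rho^2,\ldots,1+\rho^2,1)$ and all sub- and super-diagonal entries equal to $-\rho$. The eigenvalue equation $K_n(\rho)\textbf{v}=\lambda\textbf{v}$ is then equivalent to $T\textbf{v}=\eta\textbf{v}$ with $\eta=(1-\rho^2)/\lambda$, turning the problem into an elementary discrete boundary-value problem.

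An interior row of $T$ reads $-\rho v_{j-1}+(1+\rho^2)v_j-\rho v_{j+1}=\eta v_j$; its general solution is $v_j=Ae^{i\mu(j-m)}+Be^{-i\mu(j-m)}$, with $m=(n-1)/2$ and $\eta=1+\rho^2-2\rho\cos\mu$ fixed by the characteristic equation. Since $T$ (like $K_n(\rho)$) commutes with the reversal matrix $J$, eigenvectors may be taken symmetric or skew-symmetric about the middle, which picks out exactly the two candidates $v_j=\cos[\mu(j-m)]$ (type-2) and $v_j=\sin[\mu(j-m)]$ (type-1). The two boundary rows of $T$ then reduce, by the (anti)symmetry of the candidate, to a single scalar constraint: for the symmetric case the first-row equation $v_1=(2\cos\mu-\rho)v_0$ becomes, after applying $2\cos\mu\cos[(n-1)\mu/2]=\cos[(n+1)\mu/2]+\cos[(n-3)\mu/2]$, exactly $\rho\cos[(n-1)\mu/2]=\cos[(n+1)\mu/2]$, which is (\ref{eq:rho-of-mu}) for type-2; the skew-symmetric case, handled identically with sines, yields the type-1 relation.

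The final step is to check that $\lambda=(1-\rho^2)/\eta$ coincides with $\pm\sin(n\mu)/\sin\mu$ of (\ref{eq:lambda-of-mu}). For type-2, writing $a=\cos[(n+1)\mu/2]$ and $b=\cos[(n-1)\mu/2]$, the product-to-sum identities $2ab=\cos(n\mu)+\cos\mu$ and $a^2+b^2=1+\cos(n\mu)\cos\mu$ yield $a^2+b^2-2ab\cos\mu=\sin^2\mu$, while $\cos[(n-1)\mu]-\cos[(n+1)\mu]=2\sin(n\mu)\sin\mu$ gives $b^2-a^2=\sin(n\mu)\sin\mu$. Therefore $\eta=\sin^2\mu/b^2$ and $1-\rho^2=\sin(n\mu)\sin\mu/b^2$, so $\lambda=\sin(n\mu)/\sin\mu$; the type-1 computation, with sines in place of the cosines, produces the expected sign flip.

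The main obstacle is the careful trigonometric bookkeeping in this last reduction---each identity is routine, but the simplifications must line up precisely. One also needs to confirm that the excluded parameter values are exactly the degenerate cases: $\rho=\pm 1$ breaks the tridiagonalization itself, $\rho=\pm(n+1)/(n-1)$ are the $\mu\to 0$ and $\mu\to\pm\pi$ limits at which the type-1 version of (\ref{eq:rho-of-mu}) takes the indeterminate form $0/0$, and a nonzero multiple of $\pi$ for $\mu$ makes $\sin\mu=0$ (so $\lambda$ in (\ref{eq:lambda-of-mu}) is undefined) while simultaneously forcing the candidate $\textbf{v}$ to collapse.
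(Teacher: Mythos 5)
Your proposal is correct; note, however, that the paper does not reproduce a proof of this statement at all --- it is stated as a citation to Theorem~4.1 of \cite{Fik2018} --- so there is no in-paper argument to compare against.

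On its own merits, your argument goes through. The tridiagonal identity $(1-\rho^2)K_n(\rho)^{-1}=T$ is exactly the one the paper alludes to at the end (``non-Toeplitz, tridiagonal, \ldots\ given by a particularly simple formula''), and it is the standard gateway to the sine/cosine eigenvectors. I checked the key reductions: the interior recursion $-\rho v_{j-1}+(1+\rho^2-\eta)v_j-\rho v_{j+1}=0$ is satisfied identically by $v_j=\cos[\mu(j-m)]$ and $v_j=\sin[\mu(j-m)]$ once $\eta=1+\rho^2-2\rho\cos\mu$; the first-row condition $v_0(1-\eta)=\rho v_1$ collapses, via $2\cos\mu\,\cos[(n-1)\mu/2]=\cos[(n+1)\mu/2]+\cos[(n-3)\mu/2]$ (and its sine analogue), to precisely (\ref{eq:rho-of-mu}); the last row follows from the first by persymmetry plus the (anti)symmetry of $\textbf{v}$; and your identities $a^2+b^2-2ab\cos\mu=\sin^2\mu$ and $b^2-a^2=\pm\sin(n\mu)\sin\mu$ recover $\lambda=\mp\sin(n\mu)/\sin\mu$ in agreement with (\ref{eq:lambda-of-mu}). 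Two small points you gloss over: you silently divide by $\rho$ when passing from $v_0(1-\eta)=\rho v_1$ to $v_1=(2\cos\mu-\rho)v_0$, so the degenerate $\rho=0$ case should be noted separately; and it is worth stating explicitly that the excluded $\mu\in\pi\mathbb{Z}$ is precisely what guarantees the candidate $\textbf{v}$ is nonzero (e.g.\ $v_{m\pm1}=\pm\sin\mu\neq 0$ for type~1 and $v_m=1$ for type~2). Neither affects the substance. Since the determinant of $K_n(\rho)$ is $(1-\rho^2)^{n-1}$, invertibility for the remaining $\rho$ is automatic and your passage to $T$ is legitimate.
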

Theorem \ref{th:eigenvectors} holds for any $\lambda$ found via Theorem \ref{th:preliminary-mu} and, in particular, for any double eigenvalue $\lambda=\lambda_c=-n$.\footnote{By Proposition 6.1 of \cite{Fik2018}, the excluded values of $\rho$ and $\mu$ in Theorem \ref{th:eigenvectors} cannot lead to a double eigenvalue equal to $-n$; see also Section~3 of \cite{FikMavr}.} In that case, the $\textbf{\textit{v}}=\textbf{\textit{v}}_c$ of Theorem \ref{th:eigenvectors} is the coinciding limit (as $\rho\to\rho_c$) of two eigenvectors that correspond to two coalescing eigenvalues. We can say slightly more about this $\textbf{\textit{v}}_c$:

An \textsl{isotropic vector} $\textbf{\textit{u}}=(u_0,u_1,\ldots,u_{n-1})$ is one for which $\sum_{j=0}^{n-1} u_j^2=0$ \cite{HornJohnson, Garcia}. Zero is the only isotropic vector in $\mathbb{R}^n$, so nonzero isotropic vectors are complex-valued. Furthermore, it is known (Thoerem 4.12 of \cite{Garcia}) that any repeated eigenvalue of a complex-symmetric matrix is necessarily associated with an eigenvector that is isotropic. 
\begin{corollary}
\label{corollary:isotropic}
Let $\lambda=\lambda_c=-n$ be a double eigenvalue of $K_n(\rho_c)$, as described in Theorem \ref{th:preliminary-t}. Then the associated eigenvector $\textbf{v}=\textbf{\textit{v}}_c$ provided by Theorem \ref{th:eigenvectors} is isotropic.
\end{corollary}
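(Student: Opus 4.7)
The plan is to compute $\sum_{j=0}^{n-1} v_j^2$ directly from the explicit formulas in Theorem~\ref{th:eigenvectors} and to establish the stronger identity
\[
\sum_{j=0}^{n-1} v_j^2 = \frac{n+\lambda}{2}
\]
for \emph{every} eigenvector $\textbf{\textit{v}}$ produced by that theorem (not just at $\rho=\rho_c$). Specializing to the double eigenvalue $\lambda=\lambda_c=-n$ then yields $\sum_{j=0}^{n-1}v_j^2=0$, so $\textbf{\textit{v}}_c$ is isotropic. This gives a direct verification independent of the abstract fact (Theorem 4.12 of \cite{Garcia}) that some isotropic eigenvector must exist for any repeated eigenvalue of a complex-symmetric matrix.

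First, I would apply the half-angle identities $\sin^2 x=\tfrac12(1-\cos 2x)$ and $\cos^2 x=\tfrac12(1+\cos 2x)$ to the formulas of Theorem~\ref{th:eigenvectors}, obtaining
\[
\sum_{j=0}^{n-1} v_j^2 \;=\; \frac{n}{2}\,\typemp\,\frac{1}{2}\sum_{j=0}^{n-1}\cos\!\left[2\mu\left(j-\tfrac{n-1}{2}\right)\right].
\]
The sign here is exactly $\typemp$ in the paper's notation: the $\sin^2$ expansion contributes the minus (type-1), and the $\cos^2$ expansion contributes the plus (type-2). This matching of signs is what ultimately lets the two type cases be handled in a single calculation.

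Next I would evaluate the inner cosine sum by writing $2\cos\theta=e^{i\theta}+e^{-i\theta}$ and summing two geometric series in the variables $e^{\pm 2i\mu}$. The translation $j\mapsto j-(n-1)/2$ renders the sum symmetric under $\mu\mapsto -\mu$, and a short telescoping (legitimate because $\mu\neq 0,\pm\pi,\pm 2\pi,\ldots$ by the hypothesis of Theorem~\ref{th:eigenvectors}, so $e^{\pm 2i\mu}\neq 1$) gives
\[
\sum_{j=0}^{n-1}\cos\!\left[2\mu\left(j-\tfrac{n-1}{2}\right)\right] \;=\; \frac{\sin(n\mu)}{\sin\mu}.
\]
Inserting this into the previous display and using~(\ref{eq:lambda-of-mu}), which reads $\lambda=\typemp\sin(n\mu)/\sin\mu$, the two $\typemp$ signs align and the desired identity $\sum_{j=0}^{n-1} v_j^2=(n+\lambda)/2$ follows simultaneously for type-1 and type-2. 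Setting $\lambda=-n$ finishes the proof.

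I do not anticipate a substantive obstacle. The argument is essentially a geometric-series computation combined with careful bookkeeping of the way the $\typemp$ produced by the half-angle identities conspires with the $\typemp$ in~(\ref{eq:lambda-of-mu}). The only subtlety is to confirm that the excluded values of $\rho$ and $\mu$ in Theorem~\ref{th:eigenvectors} never arise at a double eigenvalue $\lambda_c=-n$; this is precisely what the footnote following Theorem~\ref{th:eigenvectors} records, so the reduction to a geometric series is valid throughout.
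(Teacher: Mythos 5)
Your proof is correct and follows essentially the same route as the paper: sum the squares of the eigenvector entries from Theorem~\ref{th:eigenvectors}, reduce the result via (\ref{eq:lambda-of-mu}) to the identity $\sum_{j=0}^{n-1} v_j^2=(n+\lambda)/2$, and set $\lambda=\lambda_c=-n$. The only difference is that you spell out the intermediate steps (half-angle identities and the geometric-series/Dirichlet-kernel evaluation of the cosine sum) that the paper's one-line proof leaves implicit; the sign bookkeeping with $\typemp$ and the appeal to the footnote about excluded $\mu$ values are both handled correctly.
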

\begin{proof}
Sum the squares of the $v_j$ given in Theorem \ref{th:eigenvectors} and then use (\ref{eq:lambda-of-mu}) to obtain
\begin{equation*}
\sum_{j=0}^{n-1} v_j^2=\frac{1}{2}\left[n\typemp \frac{\sin (n\mu)}{\sin \mu}\right]=\frac{1}{2}\left(n+\lambda\right).
\end{equation*}
As $\lambda=\lambda_c=-n$, we see that $\textit{\textbf{v}}_c$ is an isotropic vector.
\end{proof}

Evidently, many statements about $K_n(\rho)$ that are formulated in terms of $\mu$ (and/or $\mu_c$) can be alternatively formulated in terms of $t$ (and/or $t_c$). For the calculations performed in \cite{Fik2018}, \cite{FikMavr}, and  the present paper, each formulation has its own advantages and drawbacks. There seems to be no essential reason to use one formulation exclusively and we use both in the present paper. 

\section{Analytic matrix functions}
\label{section:analytic-matrix-functions}

For an analytic matrix function, this section deals with eigenvalue behavior near a double eigenvalue. Our discussions proceed from the Puiseux series, are suitable for specialization to the Kac--Murdock--Szeg\H{o} matrix $K_n(\rho)$, and are self-contained. For reasons already noted in Section \ref{section:purpose}, we truncate the Puiseux series to three terms. 

\subsection{Analytic matrix functions; Puiseux-series parameters}
\label{section:preamble}
We are concerned with an \textsl{analytic matrix function} $A_n(\rho)$ ($\rho\in\mathbb{C}$), by which we mean a complex-valued $n\times n$ matrix whose elements are analytic functions of $\rho$ in some region of a point $\rho_c$. We take $\rho_c\in\mathbb{C}$ to be such that $A_n(\rho_c)$ has an eigenvalue of algebraic multiplicity 2, which we denote by $\lambda_c$.  We further assume the existence of a (convergent) Puiseux series  that takes the usual form \cite{Lancaster}--\cite{Kato}
\begin{equation}
\label{eq:puiseux-original}
\lambda (\rho) =\lambda_c\pm \alpha\epsilon^{1/2}+\beta\epsilon+O\left(\epsilon^{3/2}\right),\quad \mathrm{as}\quad\epsilon\rightarrow 0,
\end{equation}
where 
\begin{equation}
\epsilon=\rho-\rho_c,
\end{equation}
and $\lambda_c,\,\alpha,\,\beta\in\mathbb{C}$.
The $\pm$ sign in (\ref{eq:puiseux}) corresponds to two eigenvalues $\lambda(\rho)$. We finally assume that $\lambda_c$ and $\alpha$ are nonzero and rewrite (\ref{eq:puiseux-original}) as
\begin{equation}
\label{eq:puiseux}
\lambda (\rho) =\lambda_c\left[1\pm a\epsilon^{1/2}+b \epsilon+O\left(\epsilon^{3/2}\right)\right],\quad \mathrm{as}\quad\epsilon\rightarrow 0,
\end{equation}
where 
\begin{equation}
\label{eq:gammanonzero}
\lambda_c\ne 0 \quad\mathrm{and}\quad a\neq 0.
\end{equation}
We call $a$ and $b$ the \textsl{Puiseux-series parameters.}

We denote
\begin{equation}
\label{eq:notation}
\epsilon=|\epsilon|\exp(i\theta),\quad
a=|a|\exp(i\theta_a),\quad
b=|b|\exp(i\theta_b),\quad \Theta=\theta_b-2\theta_a,
\end{equation}
so that
\begin{equation}
\label{eq:approxlambda}
\frac{\lambda (\rho)}{\lambda_c} = 1\pm|\epsilon|^{1/2}|a|\exp\left[i\left(\frac{\theta}{2}+\theta_a\right)\right]+|\epsilon||b|\exp\left[i\left(\theta+\theta_b\right)\right]+O\left(|\epsilon|^{3/2}\right).
\end{equation}
The particular choices of the multi-valued $\theta$, $\theta_a$, $\theta_b$ are irrelevant to our purposes. The double-valued ratio $\lambda(\rho)/\lambda_c$ will be referred to as \textsl{normalized~eigenvalues}.
\begin{remark} For the moment, we \textit{postulated} the existence of the Puiseux series. Section~ \ref{section:puiseux-for-analytic}, however, will develop sufficient conditions (on $A_n(\rho)$ and $\rho_c$) for the series to exist.
Let us stress that double eigenvalues are not necessarily associated with Puiseux series. A simple example is afforded by the $3\times 3$ matrix $K_3(\rho)$: When $\rho=1$, $K_3(1)$ has the double eigenvalue $\lambda_c=0$, resulting from the coalescence of the two eigenvalues $1-\rho^2$ and $\left(2+\rho^2-\rho\sqrt{\rho^2+8}\right)/2$ (see Section \ref{section:n-equals-3}). While both eigenvalues are analytic at $\rho=1$, the two analytic functions differ. Consequently, there is no Puiseux series like (\ref{eq:puiseux-original}). Such eigenvalues are not associated with bifurcations and will not concern us in this paper.
\end{remark}
\begin{remark}
The assumptions in (\ref{eq:gammanonzero}) imply that two eigenvalues, which differ when $\rho\ne\rho_c$, coincide when $\rho=\rho_c$. In other words, the two depart from the unperturbed double eigenvalue $\lambda_c$ by splitting at $\rho=\rho_c$. We have thus assumed that the critical point $\rho_c$ is an \textsl{exceptional point} in the sense of Kato, see p. 66 of \cite{Kato}. The same assumption further implies that $\rho_c$ is a branch point of $\lambda(\rho)$. (Kato remarks that an exceptional point need not be a branch point.)
\end{remark}

\subsection{Local level curve; cusp bisector}

In the complex-$\rho$ plane, the two-branched level curve (contour line) $|\lambda(\rho)/\lambda_c|=1$ passes through the point $\rho=\rho_c$. 
\textit{Near} $\rho=\rho_c$, we approximate the aforementioned level curve by another two-branched curve that we define as follows.

\begin{definition}
\label{definition:local-level-curve}
By \textsl{local level curve} we mean the locus of points $\rho\in\mathbb{C}$ such that
\begin{equation}
\label{eq:localleveldefinition}
\left|\frac{\lambda (\rho)}{\lambda_c}\right|=1+O\left(|\epsilon|^{3/2}\right),\quad \mathrm{as}\quad\epsilon\rightarrow 0.
\end{equation}
\end{definition}

  The theorem that follows is an explicit equation for the local level curve.

\begin{theorem}
\label{th:local-level-curve} 
Let $A_n(\rho)$ be an analytic matrix function that is subject to the restrictions and definitions of Section \ref{section:preamble}. Assume that the additional condition
\begin{equation}
\label{eq:condition}
|a|^2-2|b|\cos\Theta\ne 0
\end{equation}
is satisfied. Then the local level curve is given (in a neighborhood of $\rho_c$) by the points
\begin{equation}
\rho=\rho_c+|\epsilon|e^{i\theta},
\end{equation}
in which, for $\theta$ sufficiently close to $\pi-2\theta_a$,
\begin{equation}
\label{eq:near-cardioid}
|\epsilon|=|\epsilon|(\theta)=\left(\frac{2|a|\cos\left(\frac{\theta}{2}+\theta_a\right)}{|a|^2\sin^2(\frac{\theta}{2}+\theta_a)+2|b|\cos\left(\theta+\theta_b\right)} \right)^2.
\end{equation}
Thus
in polar coordinates $(r,\theta)=(|\epsilon|,\theta)$, the local level curve is given by the polar equation (\ref{eq:near-cardioid}).
\end{theorem}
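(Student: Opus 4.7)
The strategy is to use the Puiseux series to expand $|\lambda(\rho)/\lambda_c|$ in powers of $|\epsilon|^{1/2}$ and then invert the resulting implicit equation to get $|\epsilon|$ as a function of $\theta$. The argument splits naturally into three computational phases followed by a verification that the inversion is valid near the cusp bisector direction.

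First, starting from $\lambda(\rho)/\lambda_c = 1+w$ with $w=\pm a\epsilon^{1/2}+b\epsilon+O(\epsilon^{3/2})$, I would compute $|1+w|^2=1+2\mathrm{Re}(w)+|w|^2$ directly in the polar coordinates of (\ref{eq:approxlambda}). Abbreviating $\phi=\theta/2+\theta_a$ and $\psi=\theta+\theta_b$, the cross terms between the real and imaginary parts collapse via $\cos^2\phi+\sin^2\phi=1$, giving
\begin{equation*}
|\lambda(\rho)/\lambda_c|^2 = 1 \pm 2|a||\epsilon|^{1/2}\cos\phi + |\epsilon|\bigl(|a|^2+2|b|\cos\psi\bigr) + O(|\epsilon|^{3/2}).
\end{equation*}

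Second---and this is the subtle step responsible for the $\sin^2$ factor that appears in (\ref{eq:near-cardioid})---I would take the square root. Writing the previous right-hand side as $1+u$ and using $(1+u)^{1/2}=1+u/2-u^2/8+O(u^3)$, the contribution $-\tfrac18[\pm 2|a||\epsilon|^{1/2}\cos\phi]^2=-\tfrac12|a|^2|\epsilon|\cos^2\phi$ combines with $\tfrac12\cdot|a|^2|\epsilon|$ through $1-\cos^2\phi=\sin^2\phi$, producing
\begin{equation*}
\left|\frac{\lambda(\rho)}{\lambda_c}\right| = 1 \pm |a||\epsilon|^{1/2}\cos\phi + \frac{|\epsilon|}{2}\bigl(|a|^2\sin^2\phi + 2|b|\cos\psi\bigr) + O(|\epsilon|^{3/2}).
\end{equation*}
Had Definition \ref{definition:local-level-curve} been written in terms of $|\lambda/\lambda_c|^2$ rather than $|\lambda/\lambda_c|$, the $\sin^2\phi$ would simply read $1$; the distinction matters for the stated formula. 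Imposing (\ref{eq:localleveldefinition}), dividing by $|\epsilon|^{1/2}$ (legitimate for $\rho\neq\rho_c$), and solving the leading-order relation for $|\epsilon|^{1/2}$ yields $|\epsilon|^{1/2}=\mp 2|a|\cos\phi\,[\,|a|^2\sin^2\phi+2|b|\cos\psi\,]^{-1}+O(|\epsilon|)$; squaring recovers (\ref{eq:near-cardioid}).

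The main obstacle is justifying this inversion precisely where the interest lies: $\theta$ near the cusp bisector direction $\pi-2\theta_a$, at which the numerator $\cos\phi$ vanishes. At that $\theta$ one has $\phi=\pi/2$, so $\sin^2\phi=1$, and $\psi=\pi+\Theta$, so $\cos\psi=-\cos\Theta$; the denominator of (\ref{eq:near-cardioid}) therefore specializes to $|a|^2-2|b|\cos\Theta$. Hypothesis (\ref{eq:condition}) is exactly what prevents this quantity from vanishing, so by continuity the denominator stays bounded away from zero on a neighborhood of $\pi-2\theta_a$. A direct implicit-function or fixed-point argument then yields a unique small solution $|\epsilon|^{1/2}(\theta)$, and one checks that $|\epsilon|(\theta)\to 0$ as $\theta\to\pi-2\theta_a$, consistent with the local level curve passing through $\rho_c$.
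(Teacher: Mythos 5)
Your proposal is correct and follows essentially the same route as the paper's proof: form $|\lambda/\lambda_c|^2$, take the square root and observe the cancellation $1-\cos^2\phi=\sin^2\phi$ producing the $\sin^2$ factor, then impose Definition \ref{definition:local-level-curve} and solve the resulting condition for $|\epsilon|$, with hypothesis (\ref{eq:condition}) guaranteeing the denominator is bounded away from zero near $\theta=\pi-2\theta_a$. The only stylistic difference is that you phrase the final step as an inversion/implicit-function argument for $|\epsilon|^{1/2}(\theta)$, whereas the paper simply substitutes the claimed RHS of (\ref{eq:near-cardioid}) and verifies that the $O(|\epsilon|^{1/2})$ and $O(|\epsilon|)$ contributions cancel; the two are equivalent in substance.
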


\begin{proof}
Take the magnitude-squared of (\ref{eq:approxlambda}) to get
\begin{equation}
\label{eq:approxlambdamag1}
\left|\frac{\lambda (\rho)}{\lambda_c}\right|^2 = 1\pm 2|\epsilon|^{1/2}|a| \cos\left(\frac{\theta}{2}+\theta_a\right)+|\epsilon|\left[|a|^2+2|b|\cos\left(\theta+\theta_b\right)\right]+O\left(|\epsilon|^{3/2}\right).
\end{equation}
Taking the square root yields
\begin{multline}
\label{eq:approxlambdamag2}
\left|\frac{\lambda (\rho)}{\lambda_c}\right| = 1\pm |\epsilon|^{1/2}|a| \cos\left(\frac{\theta}{2}+\theta_a\right)+\frac{|\epsilon|}{2}\left[|a|^2\sin^2\left(\frac{\theta}{2}+\theta_a\right)+2|b|\cos\left(\theta+\theta_b\right)\right]\\+O\left(|\epsilon|^{3/2}\right).
\end{multline}
For $\theta=\pi-2\theta_a$, the  $O(|\epsilon|)$ term in (\ref{eq:approxlambdamag2}) is ensured by the condition (\ref{eq:condition}) to be nonzero. This term must remain nonzero for all $\theta$ sufficiently close to $\pi-2\theta_a$. For such $\theta$, therefore, the RHS of (\ref{eq:near-cardioid}) is well defined. It is also non-negative and it vanishes when $\theta=\pi-2\theta_a$. When $|\epsilon|$ is given by this RHS, and only then, the combination of the $O(|\epsilon|^{1/2})$ and $O(|\epsilon|)$ terms in (\ref{eq:approxlambdamag2}) vanishes, leaving $|\lambda(\rho)/\lambda_c|=1+O(|\epsilon|^{3/2})$, in accordance with Definition \ref{definition:local-level-curve}. 
Consequently, a point $\rho=\rho_c+|\epsilon|\exp(i\theta)$ belongs to the local level curve iff it lies on the continuous curve described by (\ref{eq:near-cardioid}).
\end{proof}

As $\theta\to \pi-2\theta_a$, it is easy to see that the local level curve in (\ref{eq:near-cardioid}) can further be approximated by another curve as follows.
\begin{equation}
\label{eq:cardioid}
|\epsilon|=|\epsilon|(\theta)\sim
\frac{4\sin^2\frac{\theta-(\pi-2\theta_a)}{2}}{\left(|a|^2-2|b|\cos\Theta\right)^2},\quad \theta\to \pi-2\theta_a.
\end{equation}
Unlike the local level curve, the above curve is symmetric about $\theta=\pi-2\theta_a$. Eqn. (\ref{eq:cardioid}), which is the polar equation of a cardioid \cite{Lawrence}, leads to the following result.
\begin{corollary}
\label{corollary:cusp}
The local level curve has a cusp at the point $\rho=\rho_c$ (or $|\epsilon|=0$).
\end{corollary}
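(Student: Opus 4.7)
The plan is to read the cusp directly off the cardioid approximation (\ref{eq:cardioid}). By the hypothesis (\ref{eq:condition}) the denominator $|a|^2 - 2|b|\cos\Theta$ is nonzero, so (\ref{eq:cardioid}) takes the standard polar form $r = c\sin^2((\theta-\theta_0)/2)$ of a cardioid centered at $\theta_0 = \pi - 2\theta_a$, whose unique cusp sits at $r = 0$. Since $r = |\epsilon|$, this cusp is the point $\rho = \rho_c$, and a direct citation to the classification of the cardioid in \cite{Lawrence} essentially closes the argument.

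For a self-contained verification I would set $\phi = \theta - \theta_0$ and use $4\sin^2(\phi/2) = \phi^2 + O(\phi^4)$ to get
\[ |\epsilon|(\theta) = K\phi^2 + o(\phi^2), \qquad K = \frac{1}{(|a|^2 - 2|b|\cos\Theta)^2} > 0. \]
The Cartesian parametrization of the local level curve is $\rho(\theta) - \rho_c = |\epsilon|(\theta)\,e^{i\theta}$, and expanding $e^{i\theta} = e^{i\theta_0}(1 + i\phi - \phi^2/2 + \cdots)$ gives
\[ \rho(\theta) - \rho_c = K\phi^2 e^{i\theta_0} + iK\phi^3 e^{i\theta_0} + O(\phi^4). \]
Projecting onto the rotated axes with real direction $e^{i\theta_0}$ and imaginary direction $ie^{i\theta_0}$ yields $x = K\phi^2$ and $y = K\phi^3$ to leading order, i.e.\ the classical semicubical cuspidal form $y^2 = K^{-1}x^3$.

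The step that needs care is verifying that two genuinely distinct branches of the local level curve meet at $\rho_c$ with common tangent direction $e^{i\theta_0}$, forming a cusp rather than a smooth arc traversed twice. This is already encoded in the derivation of Theorem \ref{th:local-level-curve}: because $\cos(\theta/2 + \theta_a)$ changes sign as $\theta$ crosses $\theta_0$ while $|\epsilon|^{1/2}$ must remain nonnegative, the two sides $\phi > 0$ and $\phi < 0$ are forced to correspond to opposite signs of the Puiseux series (\ref{eq:puiseux}), and hence to the two distinct sheets of $\lambda(\rho)$. From the semicubical normal form, these two branches sit on opposite sides of the tangent line ($y > 0$ for $\phi > 0$ and $y < 0$ for $\phi < 0$) while both lie in the half-plane $x > 0$, which is precisely the geometry of a cusp. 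The only quantity one should double-check is that the $O(|\epsilon|^{3/2})$ remainder in (\ref{eq:approxlambda}) contributes only subleading corrections to $\rho(\theta) - \rho_c$ that are $o(\phi^3)$, and therefore cannot disturb the semicubical leading behavior.
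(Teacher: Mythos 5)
Your proof is correct, and it takes a genuinely different route from the paper's. The paper disposes of Corollaries \ref{corollary:cusp} and \ref{corollary:cusp-bisector} together by observing that (\ref{eq:cardioid}) is the polar equation of a cardioid and then citing \cite{Lawrence, Courant} for the cardioid's cusp at the origin and tangent-bisecting ray. You instead give a direct, self-contained derivation of the semicubical normal form $y^2 \sim K^{-1}x^3$ in the rotated frame, by Taylor-expanding the polar radius in $\phi=\theta-\theta_0$ and projecting $\rho(\theta)-\rho_c=|\epsilon|(\theta)e^{i\theta}$ onto the directions $e^{i\theta_0}$ and $ie^{i\theta_0}$. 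This buys you an argument that does not lean on pre-catalogued properties of a special curve, and it makes explicit the Jacobian-style nondegeneracy that underlies a cusp: $(x'',y'')\propto(1,0)$ and $(x''',y''')$ not parallel to it. Your closing paragraph, tying $\phi\gtrless 0$ to the two $\pm$ sheets of (\ref{eq:puiseux}) via the sign change of $\cos(\theta/2+\theta_a)$, is a genuine addition; the paper leaves this identification of the two branches with the two sheets implicit.

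Two minor points are worth flagging. First, you quote $K=(|a|^2-2|b|\cos\Theta)^{-2}$ from (\ref{eq:cardioid}), but expanding (\ref{eq:near-cardioid}) directly about $\theta_0$ gives leading coefficient $|a|^2(|a|^2-2|b|\cos\Theta)^{-2}$; the numerator of (\ref{eq:cardioid}) has apparently lost a factor of $|a|^2$ in the paper. This only rescales $K$ and does not alter the semicubical form or the existence of the cusp, but if you want your $K$ to be the actual leading coefficient of the local level curve, carry the $|a|^2$. Second, your concern about the $O(|\epsilon|^{3/2})$ remainder in (\ref{eq:approxlambda}) disturbing the leading behavior is, in the paper's framework, moot: Theorem~\ref{th:local-level-curve} \emph{defines} the local level curve to be exactly the polar graph (\ref{eq:near-cardioid}), so the object whose cusp you are establishing is an explicit algebraic curve, and the $O(|\epsilon|^{3/2})$ tolerance lives only in Definition~\ref{definition:local-level-curve}, not in the curve itself. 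That said, the instinct to check that lower-order remainders cannot masquerade as leading terms is sound, and your observation that any such correction enters $\rho(\theta)-\rho_c$ only at order $o(\phi^3)$ is a valid closure of that loop.
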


Each value of $\theta=\arg(\epsilon)$ corresponds to a ray (half-line) that emanates from the point 
$\rho=\rho_c$  in the complex-$\rho$ plane. The ray $\theta=\pi-2\theta_a$ is exceptional in the following sense.

\begin{corollary}
\label{corollary:cusp-bisector}
The ray $\theta=\pi-2\theta_a$ bisects the cusp and  is tangent, at $\rho=\rho_c$, to both branches of the local level curve of Definition \ref{definition:local-level-curve}. We call this ray the \normalfont{cusp bisector}.
\end{corollary}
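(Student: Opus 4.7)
My plan is to read off both the tangency and the bisection claims directly from the explicit polar formulas already at hand, namely (\ref{eq:near-cardioid}) for the local level curve and its leading-order cardioidal approximation (\ref{eq:cardioid}). No new estimates are needed beyond those supplied by Theorem \ref{th:local-level-curve} and the subsequent derivation of (\ref{eq:cardioid}).

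First I would check that the local level curve reaches $\rho_c$ precisely along the ray $\theta=\pi-2\theta_a$. Substituting this value of $\theta$ into the numerator of (\ref{eq:near-cardioid}) produces a factor $\cos(\theta/2+\theta_a)=\cos(\pi/2)=0$, while the denominator is nonzero by the hypothesis (\ref{eq:condition}) of Theorem \ref{th:local-level-curve}. Hence $|\epsilon|(\pi-2\theta_a)=0$, and the two arcs that comprise the cusp (parametrised by $\theta$ slightly greater, respectively slightly less, than $\pi-2\theta_a$) both approach $\rho_c$ as $\theta\to\pi-2\theta_a$.

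Next I would invoke the standard polar-curve fact: if a curve is given in polar form $r=r(\theta)$ with polar origin at $\rho_c$, and if $r(\theta_0)=0$ with $r$ continuous, then each branch of the curve reaching the pole is tangent there to the line through $\rho_c$ at polar angle $\theta_0$. To make this concrete in our setting, after an elementary rotation that sends $\theta_0=\pi-2\theta_a$ to $0$, the cardioidal estimate (\ref{eq:cardioid}) gives $x=r\cos\theta\sim C\theta^2$ and $y=r\sin\theta\sim C\theta^3$ for small $\theta$, the familiar $y^2\propto x^3$ cusp, whose two half-branches are each tangent to the $x$-axis at the origin. Translating back, both branches of the local level curve are tangent at $\rho_c$ to the ray $\theta=\pi-2\theta_a$, which is the tangency half of the corollary.

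For the bisection claim I would observe that the right-hand side of (\ref{eq:cardioid}) depends on $\theta$ only through $\sin^2((\theta-(\pi-2\theta_a))/2)$, which is invariant under the reflection $\theta\mapsto 2(\pi-2\theta_a)-\theta$ across the ray $\theta=\pi-2\theta_a$. Thus, to the order retained in (\ref{eq:cardioid}), the two cusp branches are mirror images of each other with respect to that ray, which is therefore the bisector. I foresee no real obstacle; the only point worth flagging is that the hypothesis (\ref{eq:condition}) is exactly what keeps the denominator of (\ref{eq:cardioid}) nonzero, so that the leading-order symmetry argument does not degenerate.
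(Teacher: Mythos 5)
Your proof is correct and takes essentially the same route as the paper: both use the cardioidal approximation (\ref{eq:cardioid}) and read off the tangency and bisection from the local behavior near the pole. The only difference is presentational — the paper cites \cite{Lawrence, Courant} for the cardioid's cusp, tangency, and symmetry properties, whereas you derive them directly (vanishing of $|\epsilon|$ along the ray, the standard polar-curve tangency fact, the $y^2\propto x^3$ semicubical form, and the $\sin^2$-reflection symmetry), making the argument self-contained.
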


\begin{proof}[Proof of Corollaries \ref{corollary:cusp} and \ref{corollary:cusp-bisector}]
Since (\ref{eq:cardioid}) is the polar equation of a cardioid, the desired local properties of the local level curve in (\ref{eq:near-cardioid}) follow \cite{Courant} by comparing to familiar local properties of the cardioid: The cardioid has a cusp at the origin; the ray $\theta=\pi-2\theta_a$ bisects the cusp; and this ray is tangent to the two branches of the cardioid \cite{Lawrence, Courant}. Note that $\theta=\pi-2\theta_a$ is the only ray that has one and only one point in common with the cardioid, the point being the origin $|\epsilon|=0$.
\end{proof}
\begin{figure}
\begin{center}
\includegraphics[scale=0.4]{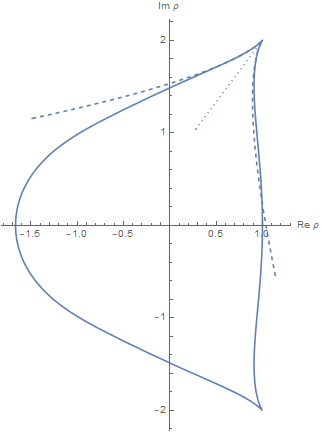}
\end{center}
\caption{Type-2 borderline curve $B_4^{(2)}$ (solid line, computed by the methods of \cite{FikMavr}), together with local level curve (dashed line) associated with the critical point $\rho_c=1+2i$. Also shown is the corresponding cusp bisector (dotted line).
} \label{fig:Fig2}
\end{figure}
\begin{example} Let $A_n(\rho)=K_n(\rho)$.
For $n=4$ and for the type-2 case, one of the critical values predicted by Theorem \ref{th:preliminary-t} is $\rho_c=1+2i$, corresponding to $t_c=(1+i)/2$. As we know $n$ and $t_c$, we can calculate $a$ and $b$ using Theorem \ref{th:gamma-delta} (see below), and it turns out that $a=(-2+i)/\sqrt{2}$ and $b=1-i3/2$. Knowledge of $a$~and $b$ now allows us to apply Theorem \ref{th:local-level-curve}, and to thus obtain the local level curve shown in Fig. \ref{fig:Fig2} as the dashed lines. Similarly, Corollary \ref{corollary:cusp-bisector} gives  the cusp bisector, shown as the dotted line in Fig. \ref{fig:Fig2}. The solid line in Fig. \ref{fig:Fig2} is the borderline curve $B_4^{(2)}$, computed by the methods of \cite{FikMavr}. 

Fig. \ref{fig:Fig3} is like Fig. \ref{fig:Fig2}, but for $n=8$ and for a type-1 critical $\rho_c$ with $\rho_c\cong 0.922 - i1.29$. Note that the borderline curve $B_8^{(1)}$ exhibits more variations than does the $B_4^{(2)}$ of Fig. \ref{fig:Fig2}, in which we see better agreement between the borderline and local level curves.
\end{example}
\begin{figure}
\begin{center}
\includegraphics[scale=0.4]{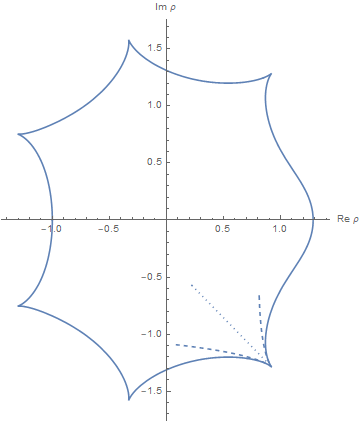}
\end{center}
\caption{Like Fig. \ref{fig:Fig2}, but for $n=8$ and for the  type-1 borderline curve $B_8^{(1)}$. Here, the type-1 critical $\rho_c$ is $\rho_c\cong 0.922 - i1.29$, corresponding to $t_c\cong 0.611-i0.274$ (see Theorem \ref{th:preliminary-t}). With these values of $n$, $\rho_c$, and $t_c$, we can use Theorem \ref{th:gamma-delta} to find $a\cong 2.74+i1.15$ and $b\cong 4.07+i4.52$, allowing us to compute the corresponding to $\rho_c$ local level curve (dashed line) and the cusp bisector (dotted line) via Theorem \ref{th:local-level-curve} and Corollary \ref{corollary:cusp-bisector}.
} \label{fig:Fig3}
\end{figure}
\subsection{Movement along the cusp bisector}

Imagine moving toward the cusp along the cusp bisector $\theta=\mathrm{arg}(\epsilon)=\pi-2\theta_a$, arriving at the cusp, and continuing in a straight line along the ray $\theta=\mathrm{arg}(\epsilon)=-2\theta_a$. We thus move along points $\rho$ given by
\begin{equation}
\label{eq:d}
\rho=\rho_c +d\exp(-i2\theta_a),\quad d\in\mathbb{R},
\end{equation}
with $d<0$ ($d>0$) before (after) the cusp. The magnitude $|d|=|\epsilon|=|\rho-\rho_c|$ is the distance to (from) the cusp. The theorem that follows gives small-$|d|$ formulas for the real parts, imaginary parts, and magnitudes of the two normalized eigenvalues.
\begin{figure}
\begin{center}
\includegraphics[scale=0.4]{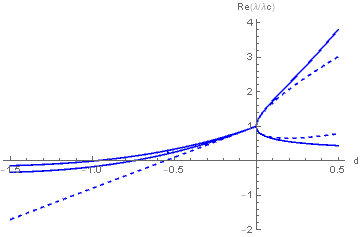}
\includegraphics[scale=0.4]{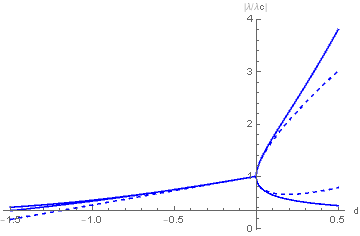}
\end{center}
\caption{Real parts (left) and magnitudes (right) of normalized eigenvalues along cusp bisector as function of $d$, for $n=4$ and $\rho_c=1+2i$ (i.e., for case corresponding to Fig. \ref{fig:Fig2}). The solid lines can be considered exact, while the dashed lines have been computed via Theorem \ref{th:evalues-in-terms-of-d}.} 
\label{fig:Fig4}
\end{figure}

\begin{theorem}
\label{th:evalues-in-terms-of-d}
Let $A_n(\rho)$ be an analytic matrix function that is subject to the restrictions and definitions of Section \ref{section:preamble}. Let $\rho$ be given by (\ref{eq:d}). Then as $d\to 0$ we have
\begin{equation}\label{eq:reallambda}
\mathrm{Re}\left\{\frac{\lambda (\rho)}{\lambda_c} \right\}=
\begin{cases} 1-|d||b|\cos\Theta+O\left(|d|^{3/2}\right),\ d\le 0,\\  
1\pm |a|\sqrt{d}+d|b|\cos\Theta+O\left(d^{3/2}\right),\ d\ge 0;
\end{cases}
\end{equation}
\begin{equation}\label{eq:imaglambda}
\mathrm{Im}\left\{\frac{\lambda (\rho)}{\lambda_c} \right\}=
\begin{cases}\pm |a|\sqrt{|d|}-|d||b|\sin\Theta+O\left(|d|^{3/2}\right),\ d\le 0,\\  
d|b|\sin\Theta+O\left(d^{3/2}\right),\ d\ge 0;
\end{cases}
\end{equation}
\begin{equation}
\label{eq:maglambda}
\left|\frac{\lambda (\rho)}{\lambda_c} \right|= 
\begin{cases} 1+|d|\,\frac{1}{2}\left(|a|^2-2|b|\cos\Theta\right)+O\left(|d|^{3/2}\right),\ d\le 0,\\  
1\pm |a|\sqrt{d}+d|b|\cos\Theta+O(d^{3/2}),\ d\ge 0.
\end{cases}
\end{equation}
\end{theorem}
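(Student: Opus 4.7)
The plan is to substitute $\epsilon=\rho-\rho_c=d\exp(-i2\theta_a)$ directly into the Puiseux expansion (\ref{eq:puiseux}) and then read off real part, imaginary part, and magnitude in each of the two cases $d>0$ and $d<0$. The key observation—which motivates the whole change of variables—is that the choice $\arg\epsilon=-2\theta_a$ makes the leading $\pm a\epsilon^{1/2}$ term either purely real or purely imaginary, so the splitting direction is dictated entirely by the sign of $d$.

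First I will separate the two cases. For $d>0$, one branch of $\epsilon^{1/2}$ is $\sqrt{d}\,e^{-i\theta_a}$, so $\pm a\epsilon^{1/2}=\pm|a|\sqrt{d}\in\mathbb{R}$; and $b\epsilon=|b|d\,e^{i\Theta}$. Substituting into (\ref{eq:puiseux}) gives
\begin{equation*}
\frac{\lambda(\rho)}{\lambda_c}=1\pm|a|\sqrt{d}+|b|d\,e^{i\Theta}+O(d^{3/2}),
\end{equation*}
from which (\ref{eq:reallambda}) and (\ref{eq:imaglambda}) for $d\ge 0$ follow by taking real and imaginary parts. For $d<0$, write $\epsilon=|d|e^{i(\pi-2\theta_a)}$, so $\epsilon^{1/2}=\sqrt{|d|}\,i\,e^{-i\theta_a}$ and $b\epsilon=-|b||d|\,e^{i\Theta}$, giving
\begin{equation*}
\frac{\lambda(\rho)}{\lambda_c}=1\pm i|a|\sqrt{|d|}-|b||d|\,e^{i\Theta}+O(|d|^{3/2}),
\end{equation*}
and the $d\le 0$ branches of (\ref{eq:reallambda}) and (\ref{eq:imaglambda}) are immediate.

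For the magnitude formula (\ref{eq:maglambda}) I will first compute $|\lambda(\rho)/\lambda_c|^2$ by adding the squares of the real and imaginary parts just found, then take a square root. For $d\ge 0$ this yields
\begin{equation*}
\left|\tfrac{\lambda(\rho)}{\lambda_c}\right|^{2}=1\pm 2|a|\sqrt{d}+\bigl(|a|^{2}+2|b|\cos\Theta\bigr)d+O(d^{3/2}),
\end{equation*}
while for $d\le 0$ the $\pm$-cross-term with the imaginary part is $O(|d|^{3/2})$ and one obtains $1+(|a|^{2}-2|b|\cos\Theta)|d|+O(|d|^{3/2})$. Applying $\sqrt{1+u}=1+u/2-u^{2}/8+O(u^{3})$ then produces the stated expressions; the only nontrivial cancellation is in the $d\ge 0$ case, where the contribution $-u^{2}/8=-|a|^{2}d/2+O(d^{3/2})$ eliminates the $|a|^{2}d/2$ from $u/2$, leaving exactly $d|b|\cos\Theta$ as the coefficient of $d$.

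The main obstacle is simply disciplined bookkeeping of orders: one must verify that every cross term neglected (for instance, the product of $\pm|a|\sqrt{d}$ with $|b|d$, or the $(|b|d)^{2}$ contribution to the magnitude-squared) is genuinely $O(|d|^{3/2})$, so that it may be absorbed into the error terms without inflating them. No new analytic input is required beyond the Puiseux series (\ref{eq:puiseux}) and the notation (\ref{eq:notation}); the result is essentially a change-of-variables exercise that exposes the geometric content of the cusp bisector.
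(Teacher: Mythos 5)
Your proof is correct and follows essentially the same route as the paper's: both substitute $\epsilon = d\exp(-i2\theta_a)$ (equivalently, $\theta=-2\theta_a$ for $d>0$ and $\theta=\pi-2\theta_a$ for $d<0$) into the Puiseux series to obtain the intermediate displayed form $1\pm i|a|\sqrt{|d|}-|d||b|e^{i\Theta}+O(|d|^{3/2})$ (for $d\le0$) and $1\pm|a|\sqrt{d}+|b|d\,e^{i\Theta}+O(d^{3/2})$ (for $d\ge0$), and then read off real parts, imaginary parts, and magnitudes. Your explicit $\sqrt{1+u}$ expansion for the magnitude step is a mild elaboration of what the paper condenses into the phrase ``now calculate... magnitudes,'' and the bookkeeping of the $-u^2/8$ cancellation you flag is exactly right.
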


\begin{proof}
By (\ref{eq:d}) and its surrounding discussions, we have $|\epsilon|=|d|$, $\theta=\pi-2\theta_a$ ($d<0$), and $\theta=-2\theta_a$ ($d>0$). Therefore (\ref{eq:approxlambda}) yields
\begin{equation*}
\frac{\lambda (\rho)}{\lambda_c}=
\begin{cases} 1\pm i|a|\sqrt{|d|}-|d||b|\exp(i\Theta)+O\left(|d|^{3/2}\right),\ d\le 0,\\  
1\pm |a|\sqrt{d}+|b|d\exp(i\Theta)+O\left(d^{3/2}\right),\ d\ge 0,
\end{cases}
\end{equation*}
where we used the last equation in (\ref{eq:notation}). Now calculate real parts, imaginary parts, and magnitudes to obtain (\ref{eq:reallambda})--(\ref{eq:maglambda}). 
\end{proof}

\begin{example} Figs. \ref{fig:Fig4} and \ref{fig:Fig5}, which correspond to Figs. \ref{fig:Fig2} and \ref{fig:Fig3}, respectively, show normalized real parts and magnitudes along our straight-line trajectory. Note that the horizontal scales in Figs. \ref{fig:Fig4} and \ref{fig:Fig5} are different. The solid lines have been computed numerically and can be considered to be exact. For $d<0$, there are two solid lines  in all cases (but the two are indistinguishable in the right Fig. \ref{fig:Fig5}). The dashed lines are the approximations found via Theorem \ref{th:evalues-in-terms-of-d}. The agreements are quite good (especially for $d<0$), and improve as we approach $d=0$. 
\end{example}
\begin{remark}
In Figs. \ref{fig:Fig4} and \ref{fig:Fig5} (as well as in Fig. \ref{fig:Fig7} below), for $d>0$ our approximate formulas underestimate (overestimate) the larger (smaller) of the two bifurcated normalized eigenvalues. By Section~6.3 of \cite{FikMavr}, the large-$|\rho|$ behaviors of the corresponding magnitudes are $|\rho|^{n-1}/n$ and $1/n$, respectively. These facts suggest that, soon after bifurcation,  the two eigenvalues rapidly head to their final (large-$|\rho|$) values. We corroborated this by many numerical results. 
\end{remark}
\begin{figure}
\begin{center}
\includegraphics[scale=0.4]{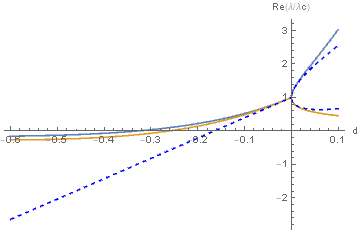}
\includegraphics[scale=0.4]{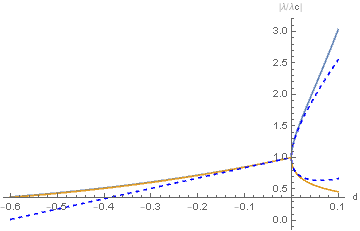}
\end{center}
\caption{Real parts (left) and magnitudes (right) of normalized eigenvalues along cusp bisector as function of $d$, for $n=8$ and $\rho_c=0.922-i1.29$ (i.e., for case corresponding to Fig. \ref{fig:Fig3}). The solid lines can be considered exact, while the dashed lines have been computed via Theorem \ref{th:evalues-in-terms-of-d}.} 
\label{fig:Fig5}
\end{figure}
\subsection{Consequences}
\label{section:consequences}

For a matrix $A_n(\rho)$ as described in Section \ref{section:preamble}, Theorems \ref{th:local-level-curve} and \ref{th:evalues-in-terms-of-d} have the following implications:
\\
\\
(i) Roughly speaking, Theorem \ref{th:evalues-in-terms-of-d} shows that, upon crossing the cusp (along our straight-line trajectory), the two  \textit{almost complex-conjugate} normalized eigenvalues become two  \textit{almost real} normalized eigenvalues; that, after crossing,
the real parts abruptly split and exhibit square-root behaviors, with one real part becoming larger than $1$ and the other smaller than $1$; and that the same is true of the magnitudes. More precisely, some of the statements just made are true to leading order. For example, before crossing the two imaginary parts do not differ by an overall sign, but their \textit{leading terms} so do; after crossing, these imaginary parts are not zero, but $O(d)$, etc.
\\
\\
(ii) Consider the magnitudes before bifurcation, as given by the top line ($d\le 0$) of  (\ref{eq:maglambda}). The coefficient of the $O(|d|)$ term is
\begin{equation}
\label{eq:parameter-c}
c\equiv\frac{1}{2}\left(|a|^2-2|b|\cos\Theta\right),
\end{equation}
which is nonzero if (\ref{eq:condition}) is assumed. When $c>0$ ($c<0$), the normalized magnitudes decrease (increase) before they reach the value $1$ and bifurcate. 
A smaller $|c|$ means a smaller slope before bifurcation. 
It also means a larger cusp-singularity coefficient in the cardioid equation (\ref{eq:cardioid}), so that the cusp bisector becomes more tightly sandwiched between the two branches of the local level curve. Therefore, it is not surprising that the magnitudes exhibit smaller slopes and less variation.
\\
\\
(iii)
Suppose that, upon arriving at the cusp, we \textit{turn} rather than continue along a straight line. As long as we do not turn back toward the path we came from, eigenvalue bifurcation still occurs, with the magnitudes and real parts still exhibiting a square-root behavior. But now, the coefficient of the $O\left(|\epsilon|^{1/2}\right)$ term in (\ref{eq:approxlambdamag2}) is less than $1$ (which is the coefficient value for the straight-line trajectory after the cusp, for which $\theta=-2\theta_a$). In this sense, there is ``less bifurcation'' than in the straight-line case. Furthermore, Theorem \ref{th:evalues-in-terms-of-d} observes bifurcations along the direction of ``maximum bifurcation.''

\section{On the Puiseux series of an analytic matrix function}
\label{section:puiseux-for-analytic}

For an analytic matrix function $A_n(\rho)$, Section \ref{section:analytic-matrix-functions} gives us an \textit{a priori} picture of bifurcation and related concepts near a critical point $\rho_c$ that corresponds to a double eigenvalue $\lambda_c$. To exploit the results of Section \ref{section:analytic-matrix-functions} quantitatively (as we have already done in our figures, which take $A_n(\rho)=K_n(\rho)$), it is necessary to know the Puiseux-series parameters. Accordingly, the present section develops formulas for $a$ and $b$, which will be specialized to $K_n(\rho)$ in the next section. We begin with two lemmas pertaining to inverse-function expansions. 

\begin{lemma}
\label{lemma:first-expansion-lemma}
Let $w(z)$ be a function that is analytic at $z=0$ with $w(0)=0$ and
\begin{equation}
\label{eq:function-w-expansion-1}
w(z)=\alpha_1z+\alpha_2z^2+O(z^3),\quad \mathrm{as} \quad z\rightarrow 0, 
\end{equation}
in which $\alpha_1\ne 0$. Then the inverse $z(w)$ exists and is analytic in some region containing $w=0$, with
\begin{equation}
\label{eq:function-z-expansion-1}
z(w)=\frac{1}{\alpha_1}w-\frac{\alpha_2}{\alpha_1^3}w^2+O(w^3),\quad \mathrm{as}\quad  w\rightarrow 0. 
\end{equation} 
\end{lemma}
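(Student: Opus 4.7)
The plan is to invoke the analytic inverse function theorem to secure the existence and analyticity of $z(w)$ near $w=0$, and then to determine the first two coefficients of its Taylor expansion by a direct series inversion.

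First I would note that, since $w(z)$ is analytic at $z=0$ with $w(0)=0$ and $w'(0)=\alpha_1\ne 0$, the analytic inverse function theorem guarantees the existence of a function $z(w)$, analytic in some neighborhood of $w=0$, with $z(0)=0$ and $w(z(w))=w$ throughout this neighborhood. Consequently, $z(w)$ admits a Taylor expansion at $w=0$ of the form
\begin{equation*}
z(w)=c_1 w+c_2 w^2+O(w^3),\quad\mathrm{as}\quad w\to 0,
\end{equation*}
for certain coefficients $c_1,c_2\in\mathbb{C}$ to be determined.

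Next I would substitute this expansion into (\ref{eq:function-w-expansion-1}) and match it against the identity $w(z(w))=w$. Using
\begin{equation*}
z(w)^2=c_1^2 w^2+O(w^3),
\end{equation*}
the substitution gives
\begin{equation*}
w=\alpha_1\bigl(c_1 w+c_2 w^2\bigr)+\alpha_2 c_1^2 w^2+O(w^3).
\end{equation*}
Equating coefficients of $w$ and of $w^2$ on the two sides yields $\alpha_1 c_1=1$ and $\alpha_1 c_2+\alpha_2 c_1^2=0$, from which $c_1=1/\alpha_1$ and $c_2=-\alpha_2/\alpha_1^3$. These are precisely the coefficients appearing in (\ref{eq:function-z-expansion-1}).

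There is no real obstacle here: the only subtlety is to ensure that the $O(z^3)$ remainder in (\ref{eq:function-w-expansion-1}), when composed with $z(w)=O(w)$, produces an $O(w^3)$ contribution that does not disturb the first two coefficients. This follows because $z(w)=O(w)$ implies $z(w)^3=O(w^3)$, so the composed remainder is indeed $O(w^3)$ and can be absorbed into the error term. Thus the coefficient identification above is rigorous, completing the proof.
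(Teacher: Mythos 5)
Your proof is correct and follows essentially the same route as the paper: invoke the analytic inverse function theorem for existence and analyticity, then obtain the coefficients by series reversion. The only difference is that you carry out the reversion explicitly by matching coefficients, whereas the paper cites standard references (Ahlfors; Morse and Feshbach) for that step.
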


\begin{proof}
The existence and analyticity of $z(w)$ follows from the inverse-function theorem (in its version for complex-analytic functions, see p. 74 of \cite{Ahlfors}). The coefficients in  (\ref{eq:function-z-expansion-1}) follow from those in  (\ref{eq:function-w-expansion-1}) by the usual inversion-of-series procedure (also called series reversion), see p. 183 of \cite{Ahlfors}, or see  \cite{MorseFeshbach}.
\end{proof}

Our next lemma gives an inverse-function expansion, analogous to the one in (\ref{eq:function-z-expansion-1}), for the case where the first derivative $w^{\prime}(z)$ vanishes. The said expansion is a Puiseux series rather than a power series.
\begin{lemma}
\label{lemma:second-expansion-lemma}
Let $w(z)$ be a function that is analytic at $z=0$, with $w(0)=0$,  $w^{\prime}(0)=0$, and
\begin{equation}
\label{eq:function-w-expansion-2}
w(z)=\alpha_2z^2+\alpha_3z^3+O(z^4),\quad \mathrm{as} \quad z\rightarrow 0, 
\end{equation}
in which $\alpha_2\ne 0$. Then, in some region containing $w=0$, $w(z)$ has a double-valued inverse $z(w)$. This inverse is an analytic function of the variable $\sqrt{w}$, with a Puiseux series given by
\begin{equation}
\label{eq:function-z-expansion-2}
z(w)=\frac{1}{\sqrt{\alpha_2}}\sqrt{w}-\frac{\alpha_3}{2\alpha_2^2}w+O(w^{3/2}),\quad \mathrm{as}\quad  w\rightarrow 0. 
\end{equation} 
\end{lemma}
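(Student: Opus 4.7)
The plan is to reduce Lemma \ref{lemma:second-expansion-lemma} to Lemma \ref{lemma:first-expansion-lemma} by extracting an analytic square root from $w(z)$. First I would write
\begin{equation*}
w(z)=\alpha_2 z^2\left(1+\tfrac{\alpha_3}{\alpha_2}z+O(z^2)\right),
\end{equation*}
and observe that the factor in parentheses is analytic near $z=0$ with value $1$ at $z=0$. Hence one can select a branch of its square root that is analytic at $z=0$ and equals $1$ there. Fixing a choice $\sqrt{\alpha_2}$ of the square root of $\alpha_2$, I would define
\begin{equation*}
u(z)=\sqrt{\alpha_2}\,z\sqrt{1+\tfrac{\alpha_3}{\alpha_2}z+O(z^2)},
\end{equation*}
which is analytic at $z=0$ and satisfies $u(z)^2=w(z)$.

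Next I would expand $u(z)$ using the binomial series $(1+x)^{1/2}=1+x/2+O(x^2)$ to get
\begin{equation*}
u(z)=\sqrt{\alpha_2}\,z+\frac{\alpha_3}{2\sqrt{\alpha_2}}z^2+O(z^3),\quad z\to 0.
\end{equation*}
Since $u^{\prime}(0)=\sqrt{\alpha_2}\ne 0$, Lemma \ref{lemma:first-expansion-lemma} applies to $u(z)$ with $\alpha_1=\sqrt{\alpha_2}$ and second coefficient $\alpha_3/(2\sqrt{\alpha_2})$, yielding an analytic inverse $z(u)$ near $u=0$ with
\begin{equation*}
z(u)=\frac{1}{\sqrt{\alpha_2}}u-\frac{\alpha_3/(2\sqrt{\alpha_2})}{(\sqrt{\alpha_2})^{3}}u^2+O(u^3)=\frac{1}{\sqrt{\alpha_2}}u-\frac{\alpha_3}{2\alpha_2^2}u^2+O(u^3).
\end{equation*}
Finally, substituting $u=\sqrt{w}$ (the two branches of $\sqrt{w}$ producing the double-valuedness of $z(w)$) and using $u^2=w$ gives the claimed Puiseux expansion (\ref{eq:function-z-expansion-2}). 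The analyticity of $z$ as a function of $\sqrt{w}$ is inherited from the analyticity of $z(u)$.

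The only subtle point---and thus the main obstacle---is the analytic square root step: one must verify that $1+\tfrac{\alpha_3}{\alpha_2}z+O(z^2)$ is nonvanishing in some neighborhood of $z=0$, so that a single-valued analytic branch of its square root exists there, and then pin down the sign convention by choosing the branch that equals $1$ at $z=0$. Once this is done, the rest is bookkeeping: combine the series expansion of the square root with Lemma \ref{lemma:first-expansion-lemma} and substitute $u=\sqrt{w}$. The two determinations of the outer $\sqrt{w}$ correspond to the two branches of the inverse of the $2\!:\!1$ local map $z\mapsto w(z)$ guaranteed by the factorization $w=u^2$ together with the local biholomorphism $z\leftrightarrow u$.
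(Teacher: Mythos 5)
Your argument is correct and follows essentially the same route as the paper: take a square root of $w(z)$ to obtain a function of $z$ with nonzero first derivative, apply Lemma~\ref{lemma:first-expansion-lemma} to invert it, and then substitute $\sqrt{w}$ for the auxiliary variable. The only difference is that you explicitly justify the existence of an analytic branch of $\sqrt{w(z)}/z$ near $z=0$, a step the paper leaves implicit.
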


\begin{proof}
Take the (double-valued) square root of (\ref{eq:function-w-expansion-2}) to obtain
 \begin{equation}
 \label{eq:temp1}
\sqrt{w(z)}=\sqrt{\alpha_2}\,z+\frac{\alpha_3}{2\sqrt{\alpha_2}}\,z^2+O(z^3),\quad \mathrm{as} \quad z\rightarrow 0, 
\end{equation}
which has a RHS like that of (\ref{eq:function-w-expansion-1}) (the two values $\sqrt{w}$ are accounted for by the two values of $\sqrt{\alpha_2}$). Therefore, Lemma \ref{lemma:first-expansion-lemma}, with $\sqrt{w}$ in place of $w$, can be applied to (\ref{eq:temp1}). This leads to (\ref{eq:function-z-expansion-2}).
\end{proof}

Suppose that we are given a function of $z$ like the RHS of (\ref{eq:function-w-expansion-1}), and another function of $z$ like the RHS of (\ref{eq:function-w-expansion-2}). The lemma that follows allows us to relate the function coefficients explicitly by eliminating the expansion parameter $z$.

\begin{lemma}
\label{lemma:f-in-terms-of-g}
Let $f(z)$ and $g(z)$ be two analytic functions with
\begin{equation}
\label{eq:f-expansion}
f(z)=f_0+f_1z+f_2z^2+O(z^3), \quad\mathrm{as}\quad z\to 0
\end{equation}
and
\begin{equation}
\label{eq:g-expansion}
g(z)=g_0+g_2z^2+g_3z^3+
O(z^4), \quad\mathrm{as}\quad z\to 0,
\end{equation}
in which $f_1\ne0$ and $g_2\ne 0$ (note that there is no $O(z)$ term in (\ref{eq:g-expansion})). Then $f$, regarded as a function of $g$, is analytic in the variable $(g-g_0)^{1/2}$, with a Puiseux series given by
\begin{equation}
\label{eq:f-g-expansion}
f=f_0+\frac{f_1}{\sqrt{g_2}}(g-g_0)^{1/2}+\frac{2f_2g_2-f_1g_3}{2g_2^2}(g-g_0)+O\left((g-g_0)^{3/2}\right),\quad\mathrm{as}\quad g\to g_0.
\end{equation}
\end{lemma}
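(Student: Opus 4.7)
The plan is to invert the relation $w = g(z) - g_0$ via Lemma \ref{lemma:second-expansion-lemma}, and then substitute the resulting Puiseux series for $z$ into the power series for $f(z)$. All the estimates are preserved under composition because $f$ is analytic in $z$ and, by Lemma \ref{lemma:second-expansion-lemma}, $z$ is analytic in $\sqrt{w}$, so the composition is analytic in $\sqrt{w}=(g-g_0)^{1/2}$.

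First I would set $w(z)=g(z)-g_0$, so that $w(0)=0$, $w'(0)=0$, and by (\ref{eq:g-expansion}) we have $w(z)=g_2 z^2+g_3 z^3+O(z^4)$ with $g_2\ne 0$. Lemma \ref{lemma:second-expansion-lemma} applies with $\alpha_2=g_2$ and $\alpha_3=g_3$, yielding the local (double-valued) inverse
\begin{equation*}
z(w)=\frac{1}{\sqrt{g_2}}\,w^{1/2}-\frac{g_3}{2 g_2^2}\,w+O(w^{3/2}),\quad w\to 0,
\end{equation*}
which is an analytic function of $w^{1/2}$.

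Next I would insert this expansion into $f(z)-f_0=f_1 z+f_2 z^2+O(z^3)$ from (\ref{eq:f-expansion}). The $f_1 z$ term contributes $\frac{f_1}{\sqrt{g_2}}w^{1/2}-\frac{f_1 g_3}{2 g_2^2}w$ up to $O(w^{3/2})$. For the $f_2 z^2$ term, squaring the series for $z(w)$ gives $z(w)^2=\frac{1}{g_2}\,w+O(w^{3/2})$, since all contributions from the $O(w)$ and higher terms of $z(w)$ multiplied against the leading $w^{1/2}$ produce $O(w^{3/2})$ or smaller. Hence $f_2 z^2=\frac{f_2}{g_2}\,w+O(w^{3/2})$, and the $O(z^3)$ tail contributes only $O(w^{3/2})$. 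Adding these together and collecting by powers of $w^{1/2}$ yields
\begin{equation*}
f-f_0=\frac{f_1}{\sqrt{g_2}}\,w^{1/2}+\left(\frac{f_2}{g_2}-\frac{f_1 g_3}{2 g_2^2}\right)w+O(w^{3/2}),
\end{equation*}
and combining the $O(w)$ coefficients over the common denominator $2g_2^2$ gives exactly (\ref{eq:f-g-expansion}).

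The main obstacle, though entirely bookkeeping in nature, is to check rigorously that the $O$-estimates survive the substitution: one must confirm that the contribution of the $O(z^3)$ remainder in $f(z)$ becomes $O(w^{3/2})$ after composing with $z(w)=O(w^{1/2})$, and similarly that the cross-terms from squaring and cubing the Puiseux series for $z(w)$ do not contaminate the $w^{1/2}$ or $w$ coefficients. Both facts are immediate once one notes that $z(w)=O(w^{1/2})$, so $z(w)^k=O(w^{k/2})$, and that analyticity of the substituted series in $w^{1/2}$ (guaranteed by composing the two analytic functions $f$ and $z(\cdot)$) ensures the resulting expansion is in fact a convergent Puiseux series in $(g-g_0)^{1/2}$.
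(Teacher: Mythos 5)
Your proof is correct, but it takes a genuinely different route from the paper's. You begin by applying Lemma~\ref{lemma:second-expansion-lemma} directly to $w=g-g_0$ to obtain $z$ as a Puiseux series in $w^{1/2}$, and then compose the regular power series for $f$ with that Puiseux series, tracking the powers of $w^{1/2}$ that result. The paper instead works in the opposite direction: it first applies Lemma~\ref{lemma:first-expansion-lemma} to $f$ (which has nonvanishing first derivative) to express $z$ as an ordinary power series in $f-f_0$, substitutes that into the expansion of $g$ to get $g-g_0$ as a power series in $f-f_0$ that begins at second order, and only \emph{then} invokes Lemma~\ref{lemma:second-expansion-lemma} once, with $f-f_0$ playing the role of $z$. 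The paper's ordering keeps all manipulations within ordinary power series until the very last step, where the single square-root inversion produces the Puiseux series; your ordering is slightly more economical (you never need Lemma~\ref{lemma:first-expansion-lemma} at the top level), but requires the small extra argument you correctly supply — that composing a power series in $z$ with a Puiseux series $z=z(w)=O(w^{1/2})$ yields a Puiseux series in $w^{1/2}$, and that the $O(z^3)$ tail contributes only $O(w^{3/2})$. Both approaches are sound, both rely on the same two inversion lemmas, and both yield the same coefficient $\frac{2f_2 g_2 - f_1 g_3}{2g_2^2}$ after combining over a common denominator.
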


\begin{proof}
By (\ref{eq:f-expansion}), the function $w=f-f_0$ has a RHS like that of (\ref{eq:function-w-expansion-1}). Therefore, Lemma \ref{lemma:first-expansion-lemma} can be applied to yield
\begin{equation*}
z=\frac{1}{f_1}(f-f_0)-\frac{f_2}{f_1^3}(f-f_0)^2+O\left((f-f_0)^3\right),\quad\mathrm{as}\quad f-f_0\to 0.
\end{equation*}
Substitute this  value of $z$ into (\ref{eq:g-expansion}) and collect like powers of $f-f_0$ to obtain
\begin{equation*}
g=g_0+\frac{g_2}{f_1^2}(f-f_0)^{2}+\frac{f_1g_3-2f_2g_2}{f_1^4}(f-f_0)^3+O\left((f-f_0)^{4}\right),\quad\mathrm{as}\quad f-f_0\to 0.
\end{equation*}
The equation just obtained shows that $w=g-g_0$ has a RHS like that of (\ref{eq:function-w-expansion-2}), with $f-f_0$ in place of $z$. Applying Lemma \ref{lemma:second-expansion-lemma} to this function $w$, we arrive at the desired conclusions.
\end{proof}

The aforementioned formulas for $a$ and $b$ are contained in the theorem that follows, which also contains sufficient conditions for the Puiseux series to exist.

\begin{theorem} 
\label{th:analytic-in-terms-of-mu} Let  $A_n(\rho)$ be an analytic matrix function and let $\rho(\mu)$ and $\lambda(\mu)$ be analytic functions in a region $D$, such that $A_n(\rho(\mu))$ has the eigenvalue $\lambda(\mu)$ for all $\mu\in D$.
Let $A_n(\rho_c)=A_n\left(\rho(\mu_c)\right)$ possess the double eigenvalue $\lambda_c=\lambda(\mu_c)$, where $\mu_c\in D$ and $\rho_c,\,\lambda_c\in\mathbb{C}$. Denote $\rho^\prime_c=\rho^\prime(\mu_c)$, $\lambda_c^{\prime\prime}=\frac{d^2\lambda}{d\mu^2}(\mu_c)$, etc., and assume that
\begin{equation}
\label{eq:rho-der-vanishes}
\lambda^{\prime}_c\ne0, \quad \rho^\prime_c=0, \quad \rho^{\prime\prime}_c\ne 0. 
\end{equation}
Then the double-valued function $\lambda(\rho)$ possesses a Puiseux series about the point $\rho=\rho_c$. This series is given by (\ref{eq:puiseux-original}) with
\begin{equation}
\label{eq:gamma-in-terms-of-mu-original}
\alpha=\lambda_c^\prime\sqrt{\frac{2}{\rho_c^{\prime\prime}}}\ne 0
\end{equation}
(in which the choice of square root is arbitrary) and
\begin{equation}
\label{eq:delta-in-terms-of-mu-original}
\beta=\frac{3\lambda_c^{\prime\prime}\rho_c^{\prime\prime}-\lambda_c^{\prime}\rho_c^{\prime\prime\prime}}{3(\rho_c^{\prime\prime})^2}.
\end{equation}
\end{theorem}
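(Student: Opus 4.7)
My plan is to recognize that Theorem \ref{th:analytic-in-terms-of-mu} is essentially a direct application of Lemma \ref{lemma:f-in-terms-of-g}, specialized by taking the expansion variable to be $z = \mu - \mu_c$, with $f(z) = \lambda(\mu_c + z)$ and $g(z) = \rho(\mu_c + z)$. Since $\lambda$ and $\rho$ are analytic in a neighborhood of $\mu_c\in D$, both $f$ and $g$ admit convergent Taylor expansions, so the hypotheses of Lemma \ref{lemma:f-in-terms-of-g} are within reach.

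First, I would identify the relevant Taylor coefficients from (\ref{eq:rho-der-vanishes}):
\begin{equation*}
f_0=\lambda_c,\quad f_1=\lambda_c^\prime,\quad f_2=\tfrac{1}{2}\lambda_c^{\prime\prime},\qquad g_0=\rho_c,\quad g_1=\rho_c^\prime=0,\quad g_2=\tfrac{1}{2}\rho_c^{\prime\prime},\quad g_3=\tfrac{1}{6}\rho_c^{\prime\prime\prime}.
\end{equation*}
The condition $\rho_c^\prime=0$ is precisely what removes the $O(z)$ term from $g$, bringing it into the form (\ref{eq:g-expansion}) required by the lemma. The further conditions $\lambda_c^\prime\ne 0$ and $\rho_c^{\prime\prime}\ne 0$ in (\ref{eq:rho-der-vanishes}) match the nonvanishing hypotheses $f_1\ne 0$ and $g_2\ne 0$.

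Next I would invoke Lemma \ref{lemma:f-in-terms-of-g}, which yields that $f$ is an analytic function of $(g-g_0)^{1/2}$ with the Puiseux series (\ref{eq:f-g-expansion}). Substituting the coefficients above and simplifying:
\begin{equation*}
\frac{f_1}{\sqrt{g_2}}=\lambda_c^\prime\sqrt{\frac{2}{\rho_c^{\prime\prime}}},\qquad\frac{2f_2g_2-f_1g_3}{2g_2^{2}}=\frac{\tfrac{1}{2}\lambda_c^{\prime\prime}\rho_c^{\prime\prime}-\tfrac{1}{6}\lambda_c^{\prime}\rho_c^{\prime\prime\prime}}{\tfrac{1}{2}(\rho_c^{\prime\prime})^{2}}=\frac{3\lambda_c^{\prime\prime}\rho_c^{\prime\prime}-\lambda_c^{\prime}\rho_c^{\prime\prime\prime}}{3(\rho_c^{\prime\prime})^{2}},
\end{equation*}
which recover (\ref{eq:gamma-in-terms-of-mu-original}) and (\ref{eq:delta-in-terms-of-mu-original}) exactly. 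The two branches of the double-valued $\sqrt{g-g_0}$ supply the $\pm$ sign in (\ref{eq:puiseux-original}), and the ambiguity in $\sqrt{\rho_c^{\prime\prime}}$ in $\alpha$ is absorbed into that sign (hence the remark that ``the choice of square root is arbitrary''). Finally I would note that the $\lambda(\rho)$ so constructed is genuinely an eigenvalue of $A_n(\rho)$, because by hypothesis $\lambda(\mu)$ is an eigenvalue of $A_n(\rho(\mu))$ for every $\mu\in D$, and our construction simply re-parametrizes by inverting $\rho(\mu)$ near $\mu_c$.

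I do not anticipate a substantive obstacle: the real work has been packaged into Lemmas \ref{lemma:first-expansion-lemma}--\ref{lemma:f-in-terms-of-g}, and the present theorem is a clean specialization. The only mild care needed is to justify that the Puiseux expansion of $\lambda$ as a function of $\rho$ coincides with the locally defined inverse provided by the lemma (rather than some other branch of the eigenvalue function of $A_n$), which follows from the analytic identity $A_n(\rho(\mu))\mathbf{v}(\mu)=\lambda(\mu)\mathbf{v}(\mu)$ guaranteed by the hypothesis that $\lambda(\mu)$ is an eigenvalue of $A_n(\rho(\mu))$ throughout $D$.
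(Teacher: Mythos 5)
Your proof is correct and follows essentially the same route as the paper: both read off the Taylor coefficients of $\lambda(\mu)$ and $\rho(\mu)$ at $\mu_c$, note that $\rho_c^\prime=0$ eliminates the linear term from $\rho$, and then apply Lemma \ref{lemma:f-in-terms-of-g} with $z=\mu-\mu_c$ to obtain the three-term Puiseux series and match coefficients with (\ref{eq:puiseux-original}).
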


\begin{proof}
By analyticity and (\ref{eq:rho-der-vanishes}) we have 
\begin{equation*}
\label{eq:lambda-expansion}
\lambda(\mu)=\lambda_c+\lambda_c^{\prime}(\mu-\mu_c)+\frac{1}{2}\lambda_c^{\prime\prime}(\mu-\mu_c)^2+O\left((\mu-\mu_c)^3\right)
\end{equation*}
and
\begin{equation*}
\rho(\mu)=\rho_c+\frac{1}{2}\rho_c^{\prime\prime}(\mu-\mu_c)^2+\frac{1}{6}\rho_c^{\prime\prime\prime}(\mu-\mu_c)^3+O\left((\mu-\mu_c)^4\right).
\end{equation*}
These two equations have RHSs like those of (\ref{eq:f-expansion}) and (\ref{eq:g-expansion}), respectively, with $\mu-\mu_c$ in place of $z$. Consequently, we can eliminate $\mu-\mu_c$ by applying Lemma \ref{lemma:f-in-terms-of-g}. In this manner, we obtain the Puiseux series
\begin{equation*}
\lambda=\lambda_c+\sqrt{\frac{2}{\rho_c^{\prime\prime}}}\,\lambda_c^{\prime}(\rho-\rho_c)^{1/2}+\frac{3\lambda_c^{\prime\prime}\rho_c^{\prime\prime}-\lambda_c^{\prime}\rho_c^{\prime\prime\prime}}{3(\rho_c^{\prime\prime})^2}(\rho-\rho_c)+O\left((\rho-\rho_c)^{3/2}\right).
\end{equation*}
Comparison with (\ref{eq:puiseux-original}) gives (\ref{eq:gamma-in-terms-of-mu-original}) and (\ref{eq:delta-in-terms-of-mu-original}). Because of the $\pm$ sign in (\ref{eq:puiseux-original}), we can choose either square root in (\ref{eq:gamma-in-terms-of-mu-original}).
\end{proof}
\begin{corollary}
\label{corollary:analytic-in-terms-of-mu}
Assume, additionally, that $\lambda_c\ne 0$. Then $A_n(\rho)$ possesses a Puiseux series of the form (\ref{eq:puiseux}), with
\begin{equation}
\label{eq:gamma-in-terms-of-mu}
a=\frac{\lambda_c^\prime}{\lambda_c}\sqrt{\frac{2}{\rho_c^{\prime\prime}}}\ne 0
\end{equation}
(in which the choice of square root is arbitrary) and
\begin{equation}
\label{eq:delta-in-terms-of-mu}
b=\frac{3\lambda_c^{\prime\prime}\rho_c^{\prime\prime}-\lambda_c^{\prime}\rho_c^{\prime\prime\prime}}{3\lambda_c(\rho_c^{\prime\prime})^2}.
\end{equation}
\end{corollary}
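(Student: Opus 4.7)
The plan is to deduce the corollary directly from Theorem \ref{th:analytic-in-terms-of-mu} by normalizing the Puiseux series. I would first recall that (\ref{eq:puiseux-original}) and (\ref{eq:puiseux}) differ only by a division by $\lambda_c$: specifically, (\ref{eq:puiseux}) is obtained from (\ref{eq:puiseux-original}) by factoring out $\lambda_c$ and identifying $a = \alpha/\lambda_c$ and $b = \beta/\lambda_c$.

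Since the added hypothesis $\lambda_c \ne 0$ makes this division legitimate, I would start from the Puiseux series provided by Theorem \ref{th:analytic-in-terms-of-mu},
\begin{equation*}
\lambda(\rho) = \lambda_c + \alpha\,\epsilon^{1/2} + \beta\,\epsilon + O(\epsilon^{3/2}),\quad \epsilon\to 0,
\end{equation*}
with $\alpha$ and $\beta$ as in (\ref{eq:gamma-in-terms-of-mu-original}) and (\ref{eq:delta-in-terms-of-mu-original}). Dividing through by $\lambda_c$ yields
\begin{equation*}
\frac{\lambda(\rho)}{\lambda_c} = 1 + \frac{\alpha}{\lambda_c}\epsilon^{1/2} + \frac{\beta}{\lambda_c}\epsilon + O(\epsilon^{3/2}),
\end{equation*}
which is exactly the normalized form (\ref{eq:puiseux}) with $a = \alpha/\lambda_c$ and $b = \beta/\lambda_c$. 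Substituting the expressions (\ref{eq:gamma-in-terms-of-mu-original}) and (\ref{eq:delta-in-terms-of-mu-original}) then produces (\ref{eq:gamma-in-terms-of-mu}) and (\ref{eq:delta-in-terms-of-mu}) verbatim.

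Finally, to verify the nonvanishing assertion in (\ref{eq:puiseux})'s companion condition (\ref{eq:gammanonzero}), I would observe that $\alpha \ne 0$ by Theorem \ref{th:analytic-in-terms-of-mu} (which in turn follows from $\lambda_c' \ne 0$ and $\rho_c'' \ne 0$ in (\ref{eq:rho-der-vanishes})), so $a = \alpha/\lambda_c \ne 0$ as well. There is no real obstacle here: the entire content of the corollary is the renormalization, and the only point requiring the new hypothesis $\lambda_c \ne 0$ is the very act of dividing the series by $\lambda_c$.
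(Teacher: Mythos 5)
Your proposal is correct and is precisely the (unstated) derivation the paper has in mind: the corollary carries no explicit proof because it is immediate from Theorem \ref{th:analytic-in-terms-of-mu} by factoring out $\lambda_c$ to pass from (\ref{eq:puiseux-original}) to (\ref{eq:puiseux}), giving $a=\alpha/\lambda_c$ and $b=\beta/\lambda_c$. Your observation that the added hypothesis $\lambda_c\ne 0$ is needed exactly to make that division legitimate, and that $a\ne 0$ inherits from $\alpha\ne 0$, completes the argument.
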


\section{Puiseux-series parameters for the Kac--Murdock--Szeg\H{o} matrix}
\label{section:puiseux-for-KMS}

This section is a straightforward application of Theorem \ref{th:analytic-in-terms-of-mu}/Corollary \ref{corollary:analytic-in-terms-of-mu} for the case $A_n(\rho)=K_n(\rho)$, in which the analytic functions $\lambda(\mu)$ and $\rho(\mu)$ of Theorem \ref{th:analytic-in-terms-of-mu} are given in (\ref{eq:lambda-of-mu}) and (\ref{eq:rho-of-mu}). The results are explicit expressions for $a$ and $b$ in terms of $n$ and the parameter $t_c$ of Theorem \ref{th:preliminary-t}. We specifically prove the following theorem.

\begin{theorem} 
\label{th:gamma-delta}
Let $K_n(\rho)$ ($n\ge 3$) possess the (type-1 or type-2) double eigenvalue $\lambda_c=-n$ when $\rho=\rho_c\in\mathbb{C}$, corresponding to $t=t_c\in\mathbb{C}$ (see  Theorem \ref{th:preliminary-t}). For this specific value of $\rho_c$, let $a$ and $b$ be the two Puiseux-series parameters defined via (\ref{eq:puiseux}). Then $a$ and $b$
can be found from the equations
\begin{equation}
\label{eq:gamma-in-terms-of-t}
a=i\sqrt{\frac{2}{n}}\left[\frac{(t_c\typemp  T_n(t_c))(1\typemp T_{n-1}(t_c))}{1-t_c^2}\right]^{1/2}
\end{equation}
(in which the choice of square root is arbitrary) and
\begin{multline}
\label{eq:delta-in-terms-of-t-new-expression}
\left[6n(t_c^2-1)(t_c\typemp T_n(t_c))\right]b=12t_c^2+5n(n+1)(t_c^2-1)+4(n+1)T_{n-1}^2(t_c)-4(n-2)T_n^2(t_c)
\\
\typemp(n+1)(4t_c^2-1)T_{n-1}(t_c)\typepm 3(n-7)t_cT_n(t_c),
\end{multline}
where the meaning of the symbols $\typepm$ and $\typemp$ is discussed in Section \ref{section:notation}.
\end{theorem}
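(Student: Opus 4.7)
The plan is to specialize Corollary~\ref{corollary:analytic-in-terms-of-mu} to $A_n(\rho)=K_n(\rho)$, using the analytic parametrization $\lambda(\mu)$, $\rho(\mu)$ of Theorem~\ref{th:preliminary-mu}. Both functions are analytic in a neighborhood of $\mu_c$ (defined by $t_c=\cos\mu_c$), since the values of $\rho_c$ excluded in Theorem~\ref{th:preliminary-t} keep $\sin\mu_c$ and the denominator of (\ref{eq:rho-of-mu}) away from zero. Identity (\ref{eq:rho-prime-in-terms-of-mu}) together with $\lambda(\mu_c)=-n$ gives $\rho^\prime_c=0$. The nondegeneracy hypotheses $\lambda^\prime_c\ne0$ and $\rho^{\prime\prime}_c\ne0$ will fall out of the explicit formulas computed below, and are in fact equivalent to the hypothesis $a\ne 0$ built into (\ref{eq:puiseux}). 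Granted the hypotheses, Corollary~\ref{corollary:analytic-in-terms-of-mu} reduces the theorem to the computation of $\lambda^\prime_c,\lambda^{\prime\prime}_c,\rho^{\prime\prime}_c,\rho^{\prime\prime\prime}_c$.

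Differentiating $\lambda(\mu)=\typemp\sin(n\mu)/\sin\mu$ once, evaluating at $\mu_c$, and using $\cos(n\mu_c)=T_n(t_c)$ together with the defining constraint $\sin(n\mu_c)=\typepm n\sin\mu_c$ (which restates $\lambda_c=-n$), I would get
\begin{equation*}
\lambda^\prime(\mu_c)=\frac{n\bigl(t_c\typemp T_n(t_c)\bigr)}{\sin\mu_c}.
\end{equation*}
Differentiating (\ref{eq:rho-prime-in-terms-of-mu}) once and using $\lambda(\mu_c)+n=0$ to kill the contribution from the denominator's derivative yields
\begin{equation*}
\rho^{\prime\prime}(\mu_c)=\frac{-\lambda^\prime(\mu_c)\sin\mu_c}{1\typemp T_{n-1}(t_c)}=\frac{-n\bigl(t_c\typemp T_n(t_c)\bigr)}{1\typemp T_{n-1}(t_c)}.
\end{equation*}
Substituting these into $a=(\lambda^\prime_c/\lambda_c)\sqrt{2/\rho^{\prime\prime}_c}$ with $\lambda_c=-n$ and $\sin^2\mu_c=1-t_c^2$ (absorbing the $\pm$ of $\sin\mu_c$ into the double-valued square root), the minus sign inside the radical produces the factor $i$ in (\ref{eq:gamma-in-terms-of-t}) and the formula for $a$ drops out.

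For $b$, I would compute $\lambda^{\prime\prime}(\mu_c)$ by a second differentiation of $\lambda$, simplifying at $\mu_c$ again via $\sin(n\mu_c)=\typepm n\sin\mu_c$, to obtain a closed expression in $t_c$ and $T_n(t_c)$. Then $\rho^{\prime\prime\prime}(\mu_c)$ follows by applying the quotient rule twice to $\rho^\prime=N/D$ with $N=-(\lambda+n)\sin\mu$ and $D=1\typemp\cos((n-1)\mu)$. Because $N(\mu_c)=0$, the formula collapses to $\rho^{\prime\prime\prime}_c=N^{\prime\prime}_c/D_c-2N^\prime_c D^\prime_c/D_c^2$; the only nontrivial new ingredient is $\sin((n-1)\mu_c)$, which via the subtraction formula equals $-\sin\mu_c\bigl(T_n(t_c)\typemp nt_c\bigr)$. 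Substituting all four derivatives into $b=(3\lambda^{\prime\prime}_c\rho^{\prime\prime}_c-\lambda^\prime_c\rho^{\prime\prime\prime}_c)/[3\lambda_c(\rho^{\prime\prime}_c)^2]$ and clearing the common factor $1\typemp T_{n-1}(t_c)$ between numerator and denominator then leaves a rational expression in $t_c,T_n(t_c),T_{n-1}(t_c)$.

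The main obstacle is this final algebraic simplification: arranging the raw expression into the exact polynomial form on the right-hand side of (\ref{eq:delta-in-terms-of-t-new-expression}), with the precise bracket $6n(t_c^2-1)(t_c\typemp T_n(t_c))$ clearing the left. The step is purely mechanical but error-prone: it requires systematic collection of cross-terms such as $T_nT_{n-1}$, $T_n^2$, $T_{n-1}^2$, $t_cT_n$, $t_cT_{n-1}$, aided by the Chebyshev identities catalogued in Appendix~\ref{app:b}, and careful tracking of the $\typepm/\typemp$ signs so that a single formula covers both type-1 and type-2 cases at once.
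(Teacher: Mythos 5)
Your proposal matches the paper's proof in both strategy and detail: like the paper, you specialize Corollary~\ref{corollary:analytic-in-terms-of-mu} to $K_n(\rho)$ via the parametrization of Theorem~\ref{th:preliminary-mu}, obtain $\rho'_c=0$ from (\ref{eq:rho-prime-in-terms-of-mu}) and $\lambda_c=-n$, differentiate the implicit relations $\typemp\sin(n\mu)=\lambda\sin\mu$ and $\rho'(\mu)\bigl[1\typemp\cos((n-1)\mu)\bigr]=-(\lambda+n)\sin\mu$ to get $\lambda'_c,\lambda''_c,\rho''_c,\rho'''_c$ (your quotient-rule collapse of $\rho'''_c$ using $N(\mu_c)=0$ is algebraically identical to the paper's repeated differentiation of the product form), translate to $t_c$ using $\cos(k\mu_c)=T_k(t_c)$, $\sin(n\mu_c)=\typepm n\sin\mu_c$, and $\sin((n-1)\mu_c)=-\sin\mu_c\,(T_n(t_c)\typemp nt_c)$, and finish with the Chebyshev identities of Appendix~\ref{app:b}. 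Your intermediate expressions for $\lambda'_c$, $\rho''_c$, and the derivation of (\ref{eq:gamma-in-terms-of-t}) coincide with the paper's, and your characterization of the remaining work for $b$ as a careful but mechanical collection of $T_n$, $T_{n-1}$ cross-terms matches the paper's own description of that step as ``cumbersome but straightforward.''
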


\begin{proof}  By (\ref{eq:rho-prime-in-terms-of-mu}) and (\ref{eq:lambdac-equals-minus-n}) we have
\begin{equation}
\label{eq:rho-der-vanishes-kms}
\rho'(\mu_c)=\rho'_c=0.
\end{equation}
We can thus apply Corollary \ref{corollary:analytic-in-terms-of-mu} for the case $A_n(\rho)=K_n(\rho)$, for which $\mu_c$ is given via (\ref{eq:tc-mc}).
Our proof consists of calculating the derivatives appearing in (\ref{eq:gamma-in-terms-of-mu}) and (\ref{eq:delta-in-terms-of-mu}), and then expressing the resulting formulas for $a$ and $b$ in terms of $t_c=\cos \mu_c$. 

Eqn. (\ref{eq:lambda-of-mu}) gives 
\begin{equation}
\label{eq:lambda-to-differentiate}
\typemp\sin(n\mu)=\lambda(\mu)\sin\mu.
\end{equation}
Take the derivative of (\ref{eq:lambda-to-differentiate}) with respect to $\mu$, set $\mu=\mu_c$, replace $\lambda(\mu_c)=\lambda_c$ using (\ref{eq:lambdac-equals-minus-n}), and solve for $\lambda^\prime_c= \lambda^\prime(\mu_c)$ to obtain
\begin{equation}
\label{eq:lambda-der}
\lambda_c^\prime=\frac{n[\cos\mu_c\typemp \cos(n\mu_c)]}{\sin\mu_c},
\end{equation}
which gives $\lambda^\prime_c$ in terms of $n$ and $\mu_c$.

Now repeat the above procedure taking \textit{two} derivatives of (\ref{eq:lambda-to-differentiate}), and solve for  $\lambda^{\prime\prime}_c=\lambda^{\prime\prime}(\mu_c)$ to obtain 
\begin{equation}
\label{eq:lambda-der-der}
\lambda^{\prime\prime}_c=\frac{-n\sin\mu_c-2\lambda^\prime_c\cos\mu_c\typepm n^2 \sin(n\mu_c)}{\sin\mu_c},
\end{equation}
which gives  $\lambda^{\prime\prime}_c$ in terms of $n$, $\mu_c$, and $\lambda^\prime_c$ ($\lambda^\prime_c$ has already been found in  (\ref{eq:lambda-der})).

Eqn. (\ref{eq:rho-prime-in-terms-of-mu}) gives
\begin{equation}
\label{eq:rho-to-differentiate}
\rho^\prime(\mu)[1\typemp \cos((n-1)\mu)]=- (\lambda(\mu)+n)\sin\mu.
\end{equation}
Take the derivative of (\ref{eq:rho-to-differentiate}) with respect to $\mu$, set $\mu=\mu_c$, replace $\lambda(\mu_c)=\lambda_c$ using (\ref{eq:lambdac-equals-minus-n}), replace $\rho^\prime(\mu_c)=\rho^\prime_c=0$ using (\ref{eq:rho-der-vanishes-kms}),  and solve for $\rho^{\prime\prime}_c= \rho^{\prime\prime}(\mu_c)$ to obtain
\begin{equation}
\label{eq:rho-der-der}
\rho^{\prime\prime}_c=\frac{-\lambda^\prime_c\sin\mu_c}{1\typemp \cos[(n-1)\mu_c]},
\end{equation}
in which $\lambda^\prime_c$ has been found in (\ref{eq:lambda-der}).

Repeat the above procedure taking \textit{two} derivatives of (\ref{eq:rho-to-differentiate}), and solve for  $\rho^{\prime\prime\prime}_c=\rho^{\prime\prime\prime}(\mu_c)$ to obtain
\begin{equation}
\label{eq:rho-der-der-der}
\rho^{\prime\prime\prime}_c=\frac{\typemp2(n-1)\rho^{\prime\prime}_c\sin[(n-1)\mu_c]-\lambda^{\prime\prime}_c\sin\mu_c-2\lambda^\prime_c\cos\mu_c}{1\typemp \cos[(n-1)\mu_c]},
\end{equation}
which is to be used together with (\ref{eq:lambda-der}), (\ref{eq:lambda-der-der}), and (\ref{eq:rho-der-der}).

Now substitute (\ref{eq:lambdac-equals-minus-n}), (\ref{eq:lambda-der}), (\ref{eq:lambda-der-der}), (\ref{eq:rho-der-der}), and (\ref{eq:rho-der-der-der}) into (\ref{eq:gamma-in-terms-of-mu}) and (\ref{eq:delta-in-terms-of-mu}). Application of trigonometric identities (this step is cumbersome but straightforward) leads to
\begin{equation*}
a=i\sqrt{\frac{2}{n}}\sqrt{\frac{[\cos\mu_c\typemp\cos(n\mu_c)][1\typemp \cos((n-1)\mu_c)]}{\sin^2\mu_c}}
\end{equation*}
and
\begin{multline*}
-6n\sin^2\mu_c[\cos\mu_c\typemp\cos(n\mu_c)]\,b\\=12+6\cos(2\mu_c)+2(n+1)\cos[2(n-1)\mu_c]-2(n-2)\cos(2n\mu_c)\\
\typemp (n+1)\cos[(n-3)\mu_c]
\typepm(n-23)\cos\mu_c\cos(n\mu_c)
\typemp 5(n+1)\sin\mu_c\sin(n\mu_c).
\end{multline*}
The desired equation (\ref{eq:gamma-in-terms-of-t}) results from the equation for $a$ just obtained via (\ref{eq:chebyshev}) and (\ref{eq:tc-mc}). Similarly, from the equation for $b$ just obtained, via (\ref{eq:chebyshev}) and (\ref{eq:tc-mc}) we first get
\begin{multline}
\label{eq:delta-in-terms-of-t-old-expression}
\left[6n(t_c^2-1)(t_c\typemp T_n(t_c))\right]b=6+12t_c^2+5n(n+1)(t_c^2-1)+2(n+1)T_{2n-2}(t_c)\\-2(n-2)T_{2n}(t_c)\typemp(n+1)T_{n-3}(t_c)\typepm(n-23)t_c\,T_n(t_c),
\end{multline}
in which we have used the equality
\begin{equation*}
\typemp 5(n+1)\sin\mu_c\sin(n\mu_c)=\typemp 5(n+1)(1-t_c^2)U_{n-1}(t_c)=5n(n+1)(t_c^2-1),
\end{equation*}
which follows from (\ref{eq:chebyshev}), (\ref{eq:tc-mc}), and Theorem \ref{th:preliminary-t}. Finally, (\ref{eq:delta-in-terms-of-t-new-expression}) results from (\ref{eq:delta-in-terms-of-t-old-expression}) if we substitute $T_{2n-2}(t_c)$ and $T_{2n}(t_c)$ using (\ref{eq:b7}), and express $T_{n-3}(t_c)$ in terms of $T_{n-1}(t_c)$ and $T_{n}(t_c)$ by twice using (\ref{eq:b8}).
\end{proof}

We have already given qualitative and quantitative results based on the formulas in Theorem \ref{th:gamma-delta}. While suitable for numerical calculations, those formulas are complicated. We now proceed to a case where  expressions simplify and become more informative. In the case to be discussed, we will also see that the bifurcating eigenvalues are \textit{exactly} real.

\section{Kac--Murdock--Szeg\H{o}  matrix with a purely imaginary $\rho_c$}
\label{section:purely-imaginary-rho}

This section deals with Kac--Murdock--Szeg\H{o}  matrices $K_n(\rho)$ that satisfy the following restrictions.\\ 
(i) The matrix dimension $n$ takes on the values $n=3,5,7,9,\ldots$.\\
(ii) The parameter $\rho$ is close to a purely imaginary critical value $\rho_c=iy_{n}$ ($y_{n}>0$), for which $K_n(\rho)$ possesses a double eigenvalue (equal, as always, to $-n$). We will soon verify that such a $\rho_c$ indeed exists, and show how to compute it.

When we observe the two eigenvalues, we assume a movement along the imaginary-$\rho$ axis  and close to $y_{n}$; as we will see, the imaginary axis underlies the cusp bisector. We thus track the variations of $\lambda(\rho)=\lambda(iy)$, where $y\in\mathbb{R}$ and  $y\to y_{n}$.

Subject to the aforesaid conditions, Theorem \ref{th:evalues-in-terms-of-y} will demonstrate that our major results simplify considerably. In particular, it turns out that $\theta_a=3\pi/4$, $\theta_b=-\pi/2$, and $\Theta=-2\pi$. It is thus helpful to alter our notation somewhat, and to denote $|a|$, $|b|$, and $c$ by $a_n$, $b_n$, and $c_n$ respectively (but we retain our notation for $d$). 

Theorem \ref{th:evalues-in-terms-of-y} will be conveniently expressed in terms of a new $n$-dependent parameter that we denote by $x_n$. It is defined as follows.

\begin{lemma}
\label{lemma:x-parameter}
For $n=3,5,7,9\ldots$, the equation
\begin{equation}
\label{eq:x-parameter}
T_n(x)=nx,\quad x\in(1,+\infty)
\end{equation}
has a unique solution $x=x_n$.
\end{lemma}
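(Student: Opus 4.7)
The plan is to reformulate the equation as a root-finding problem for the single-variable function $f(x) = T_n(x) - nx$ on the interval $[1,+\infty)$, and then to establish existence by the intermediate value theorem and uniqueness by strict monotonicity. So the whole lemma reduces to three claims: $f(1) < 0$, $f(x) \to +\infty$ as $x \to +\infty$, and $f'(x) > 0$ on $(1,+\infty)$.

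The two boundary facts are immediate. At the left endpoint, $T_n(1) = 1$ (which follows from the definition (\ref{eq:chebyshev})), so $f(1) = 1 - n \le -2$ for all $n \ge 3$. At the right end, $T_n(x)$ is a polynomial of degree $n \ge 3$ with positive leading coefficient $2^{n-1}$, so $T_n(x)$ dominates the linear term $nx$ and $f(x) \to +\infty$. Hence $f$ has at least one zero in $(1,+\infty)$.

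The main step is strict monotonicity on $(1,+\infty)$. I would use the standard Chebyshev derivative identity $T_n^{\prime}(x) = n\,U_{n-1}(x)$ (which is among the properties tabulated in Appendix B), which gives
\begin{equation*}
f^{\prime}(x) = n\bigl(U_{n-1}(x) - 1\bigr).
\end{equation*}
From (\ref{eq:chebyshev}) one has $U_{n-1}(1) = n$, and $U_{n-1}$ is strictly increasing on $[1,+\infty)$; the latter is most transparently seen by substituting $x = \cosh u$ with $u > 0$, whence $U_{n-1}(\cosh u) = \sinh(nu)/\sinh u$, a product of positive, strictly increasing functions of $u$. Therefore $U_{n-1}(x) \ge n \ge 3$ for all $x \ge 1$, and so $f^{\prime}(x) > 0$ throughout $[1,+\infty)$. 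Combined with the sign change of $f$, this gives exactly one solution $x_n \in (1,+\infty)$.

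I do not anticipate any real obstacle: everything reduces to elementary properties of $T_n$ and $U_{n-1}$ already listed in Appendix B. It is worth noting, however, that the argument above never uses the parity of $n$; the hypothesis that $n$ is odd plays no role in existence or uniqueness and presumably is imposed because the parameter $x_n$ will subsequently be used only for odd $n$ (for instance, to produce the purely imaginary $\rho_c = iy_n$ described at the start of Section \ref{section:purely-imaginary-rho}).
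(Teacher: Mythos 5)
Your proof is correct and follows essentially the same route as the paper's: define $f(x)=T_n(x)-nx$, check $f(1)=1-n<0$ and $f(+\infty)=+\infty$, and show $f'(x)=n[U_{n-1}(x)-1]>0$ using the Appendix~\ref{app:b} facts (the paper cites (\ref{eq:b2})--(\ref{eq:b3}) directly where you re-derive the monotonicity of $U_{n-1}$ via the $\cosh$ substitution). Your closing remark that the parity of $n$ is unused in this lemma is also accurate.
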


\begin{proof}
By (\ref{eq:b2})--(\ref{eq:b3}), the function $f_n(x)=T_n(x)-nx$ satisfies $f_n(1)=1-n<0$, $f_n(+\infty)=+\infty$, and has the positive derivative $f^\prime_n(x)=n[U_{n-1}(x)-1]$.
\end{proof}

Given $n$, we can find $x_n$ by numerically solving (\ref{eq:x-parameter}) for $x$. The lemma that follows provides an alternative method.

\begin{lemma}
\label{lemma:v-parameter}
The parameter $x_n$ defined in Lemma \ref{lemma:x-parameter} can also be found from
\begin{equation}
\label{eq:x-in-terms-of-v}
x_n=\cosh v_n,
\end{equation}
where $v=v_n$ is the unique positive solution to the equation\footnote{The symbol $v$ has been chosen judiciously because, for $n=3,5,\ldots$,  (\ref{eq:v-parameter}) is equivalent to (4.3) of \cite{FikMavr}, in the special case when $u=\pi/2$ in the latter equation. This is true because $\rho_c=iy_n$ is the point where the borderline curve $B_n^{(k)}$ intersects the positive-imaginary semi-axis.}
\begin{equation}
\label{eq:v-parameter}
\cosh(nv)=n\cosh v.
\end{equation}
\end{lemma}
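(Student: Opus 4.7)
The plan is to reduce everything to Lemma \ref{lemma:x-parameter} via the change of variables $x = \cosh v$, exploiting the classical identity $T_n(\cosh v) = \cosh(nv)$ (which appears in Appendix B of the paper and follows directly from (\ref{eq:chebyshev})). Since $\cosh$ maps $[0,\infty)$ bijectively and strictly increasingly onto $[1,\infty)$, writing $x = \cosh v$ sets up a one-to-one correspondence between $v \in [0,\infty)$ and $x \in [1,\infty)$, under which the equation $T_n(x) = nx$ transforms into $\cosh(nv) = n\cosh v$. Moreover, $x > 1$ corresponds to $v > 0$.

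First I would formally invoke this substitution. Given the unique $x_n \in (1,\infty)$ supplied by Lemma \ref{lemma:x-parameter}, let $v_n$ be the unique positive number with $\cosh v_n = x_n$; applying $T_n(\cosh v_n) = \cosh(n v_n)$ to both sides of $T_n(x_n) = nx_n$ immediately yields $\cosh(n v_n) = n \cosh v_n$, so $v_n$ is a positive solution of (\ref{eq:v-parameter}).

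Next I would establish that this $v_n$ is in fact the \emph{only} positive solution of (\ref{eq:v-parameter}). Rather than merely pulling uniqueness back through Lemma \ref{lemma:x-parameter}, the cleanest route is to mimic the proof of that lemma directly: set $g_n(v) = \cosh(nv) - n\cosh v$, observe $g_n(0) = 1 - n < 0$ and $g_n(v) \to +\infty$ as $v \to +\infty$, and compute
\begin{equation*}
g_n'(v) = n\bigl[\sinh(nv) - \sinh v\bigr] > 0 \quad \text{for } v > 0,
\end{equation*}
since $\sinh$ is strictly increasing and $nv > v$ for $n \geq 3$. Thus $g_n$ is strictly increasing on $(0,\infty)$ and has exactly one zero there, namely $v_n$.

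There is no real obstacle — the argument is essentially a one-line substitution plus a monotonicity check parallel to Lemma \ref{lemma:x-parameter}. The only thing to be slightly careful about is handling the endpoint $v = 0$: one must note that $v = 0$ gives $\cosh(0) = 1 \neq n = n\cosh(0)$, so it is not a solution and the correspondence with $x \in (1,\infty)$ is clean. The footnote connecting (\ref{eq:v-parameter}) with (4.3) of \cite{FikMavr} does not need to be proved here; it merely motivates the notation $v_n$.
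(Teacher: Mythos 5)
Your proof is correct and takes essentially the same route as the paper: the paper's proof is exactly the substitution $x=\cosh v$ combined with the identity $T_k(\cosh x)=\cosh(kx)$ (eqn. (\ref{eq:b101})), with uniqueness inherited from Lemma \ref{lemma:x-parameter} via the bijection $\cosh\colon(0,\infty)\to(1,\infty)$. Your extra direct monotonicity check of $g_n(v)=\cosh(nv)-n\cosh v$ is a harmless (and slightly redundant) supplement, not a different method.
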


\begin{proof}
In Lemma \ref{lemma:x-parameter}, set $x=\cosh v$ and use (\ref{eq:b101}). 
\end{proof}

Solving (\ref{eq:v-parameter}) instead of (\ref{eq:x-parameter}) may be advantageous for numerics,  especially when using the large-$n$ formula (\ref{eq:vn-large-n}) (see below) as an initial approximation for iterative root-finding routines. 

The numbers $x_n$ help us obtain purely imaginary zeros of the polynomials $q_1(n,t)$ and $q_2(n,t)$ that appear in Theorem \ref{th:preliminary-t}:

\begin{lemma}
\label{lemma:imaginary-zeros}
For $n=3,5,7,\ldots$, the $x_n$ defined in Lemma \ref{lemma:x-parameter} satisfy
\begin{equation}
U_{n-1}\left(i\sqrt{x_n^2-1}\right)=U_{n-1}\left(\sqrt{1-x_n^2}\right)=(-1)^\frac{n-1}{2}\,n.
\end{equation}
\end{lemma}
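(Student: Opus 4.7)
The plan is to reduce the statement to a single identity and then evaluate it through the hyperbolic parametrization of Lemma \ref{lemma:v-parameter}. Since $n$ is odd, $n-1$ is even, and the Chebyshev polynomial $U_{n-1}$ is therefore an even function (as is readily seen from (\ref{eq:chebyshev}) or from the standard recurrence listed in Appendix \ref{app:b}). The two numbers $i\sqrt{x_n^2-1}$ and $\sqrt{1-x_n^2}$ are the two complex square roots of $1-x_n^2$ and hence differ only by a sign, so the evenness of $U_{n-1}$ makes the first equality in the statement automatic. It remains to establish
$$U_{n-1}\left(i\sqrt{x_n^2-1}\right)=(-1)^{(n-1)/2}\,n.$$

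Next I would switch to the hyperbolic parametrization $x_n=\cosh v_n$ from Lemma \ref{lemma:v-parameter}, so that $i\sqrt{x_n^2-1}=i\sinh v_n$, and set $\mu_c=\pi/2-i v_n$. This choice yields $\cos\mu_c=\sin(i v_n)=i\sinh v_n$ and $\sin\mu_c=\cos(i v_n)=\cosh v_n = x_n\ne 0$. Because the identity $U_{n-1}(\cos\mu)=\sin(n\mu)/\sin\mu$ in (\ref{eq:chebyshev}) is a polynomial identity in $\cos\mu$, it extends by analytic continuation to this complex $\mu_c$, giving
$$U_{n-1}\!\left(i\sinh v_n\right)=\frac{\sin(n\pi/2-inv_n)}{\cosh v_n}.$$

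The numerator is then evaluated by the angle-addition formula:
$$\sin(n\pi/2-inv_n)=\sin(n\pi/2)\cosh(nv_n)-i\cos(n\pi/2)\sinh(nv_n).$$
Since $n$ is odd, $\cos(n\pi/2)=0$ and $\sin(n\pi/2)=(-1)^{(n-1)/2}$, so the numerator collapses to $(-1)^{(n-1)/2}\cosh(n v_n)$. Invoking the defining relation (\ref{eq:v-parameter}), namely $\cosh(n v_n)=n\cosh v_n$, cancels the $\cosh v_n$ in the denominator and leaves precisely $(-1)^{(n-1)/2}\,n$, as required.

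The main obstacle is essentially bookkeeping rather than anything substantive: one must verify the parity of $U_{n-1}$ in order to dispose of the two square-root conventions simultaneously, keep the trigonometric and hyperbolic identities consistent when passing through complex arguments, and correctly extract $\sin(n\pi/2)$ and $\cos(n\pi/2)$ for odd $n$. The appearance of the factor $(-1)^{(n-1)/2}$ is thus traced cleanly to the value of $\sin(n\pi/2)$, which is the only place where the oddness of $n$ is genuinely used.
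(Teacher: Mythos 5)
Your argument is correct, and the first equality is handled exactly as the paper does it (evenness of $U_{n-1}$, recorded as (\ref{eq:b51})). For the second equality, however, you take a genuinely different route. The paper invokes the algebraic identity (\ref{eq:b11}), namely $U_{n-1}\bigl(\sqrt{1-z^2}\bigr)=(-1)^{(n-1)/2}z^{-1}T_n(z)$ for $n-1$ even, sets $z=x_n$, and then applies the defining equation $T_n(x_n)=nx_n$ of Lemma \ref{lemma:x-parameter} to finish in one line. You instead pass to the hyperbolic parametrization $x_n=\cosh v_n$ of Lemma \ref{lemma:v-parameter}, choose $\mu_c=\pi/2-iv_n$ so that $\cos\mu_c=i\sinh v_n$, evaluate $U_{n-1}(\cos\mu_c)=\sin(n\mu_c)/\sin\mu_c$ by angle addition, and then use $\cosh(nv_n)=n\cosh v_n$ in place of $T_n(x_n)=nx_n$ (the two being equivalent via (\ref{eq:b101})). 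Your computation is sound: the sign $(-1)^{(n-1)/2}$ is correctly traced to $\sin(n\pi/2)$, and $\cos(n\pi/2)=0$ kills the extraneous imaginary contribution. The trade-off is that the paper's argument is shorter because it leans on the pre-packaged identity (\ref{eq:b11}), whereas yours is more self-contained -- it essentially re-derives the needed special case of (\ref{eq:b11}) from the trigonometric definition (\ref{eq:chebyshev}) and makes explicit where the oddness of $n$ enters, at the modest cost of some bookkeeping with complex trigonometric and hyperbolic identities.
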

 
\begin{proof}
The first equality follows from (\ref{eq:b51}) and the second from (\ref{eq:b11}) and (\ref{eq:x-parameter}).
\end{proof}

The central result of this section is the following theorem, which specializes our main results so far.

\begin{theorem}
\label{th:evalues-in-terms-of-y}
For $n=3,5,7,9,\ldots$, let $y_{n}$ be the positive number
\begin{equation}
\label{eq:y-c}
y_{n}=\frac{1}{\sqrt{x_n^2-1}}\frac{T_{(n+1)/2}(x_n)}{U_{(n-3)/2}(x_n)},
\end{equation}
where $x_n$ is defined in Lemma \ref{lemma:x-parameter}, and let $\rho_c=iy_n$.
Then the matrix $K_n(\rho_c)=K_n(iy_{n})$ possesses the double eigenvalue $\lambda_c=-n$. This double eigenvalue is of type-1 when $n=5,9,13,17\ldots$ and of type-2 when $n=3,7,11,15\ldots$.

The associated Puiseux-series parameters $a_n\equiv |a|$, $b_n\equiv |b|$, $\theta_a$, $\theta_b$, and $\Theta$ (see (\ref{eq:puiseux}) and (\ref{eq:notation})) can be found from
\begin{equation}
\label{eq:thetas-for-imag}
\theta_a=\frac{3\pi}{4},\quad \theta_b=-\frac{\pi}{2},\quad \Theta=-2\pi,
\end{equation}
\begin{equation}
\label{eq:mag-gamma-for-imag}
a_{n}=\sqrt{\frac{2}{n}}\,\frac{(x_n^2-1)^{1/4}}{x_n}\sqrt{[U_{n-1}(x_n)-1][T_{n-1}(x_n)-1]},
\end{equation}
and
\begin{multline}
\label{eq:mag-delta-for-imag}
6nx_n^2\sqrt{x_n^2-1}[U_{n-1}(x_n)-1]b_{n}=12+4(n+1)T_{n-1}^2(x_n)-(5n^2+5n+12)x_n^2\\
-3(n-7)(x_n^2-1)U_{n-1}(x_n)
+4(n-2)(n^2x_n^2-1)
+(n+1)(4x_n^2-3)T_{n-1}(x_n).
\end{multline}

The associated cusp bisector is $\theta=-\pi/2$, which is the ray that starts from the point $\rho=iy_{n}$, lies on the imaginary axis, and points downward. Sufficiently close to this ray, the local level curve is given by the polar equation
\begin{equation}
\label{eq:near-cardioid-for-imag}
|\epsilon|=|\epsilon|(\theta)=\frac{8a_{n}^2(1+\sin\theta)}{[a_{n}^2+(4b_{n}-a_{n}^2)\sin\theta]^2}\quad (\epsilon=\rho-iy_{n}),
\end{equation}
and is symmetric with respect to the imaginary axis.

Finally, let $\rho=iy$ with $y>0$ and let $d=y-y_{n}$. Then as 
$y\to y_{n}$
we have
\begin{equation}\label{eq:reallambda-for-imag}
\mathrm{Re}\left\{\frac{\lambda (\rho)}{\lambda_c} \right\}=
\begin{cases} 1-|d|b_{n}+O\left(|d|^{3/2}\right),\ d=y-y_n\le 0,\\  
1\pm a_{n}\sqrt{d}+db_{n}+O\left(d^{3/2}\right),\ d=y-y_n\ge 0;
\end{cases}
\end{equation}
\begin{equation}\label{eq:imaglambda-for-imag}
\mathrm{Im}\left\{\frac{\lambda (\rho)}{\lambda_c} \right\}=
\begin{cases}\pm a_{n}\sqrt{|d|}+O\left(|d|^{3/2}\right),\ d=y-y_n\le 0,\\  
O\left(d^{3/2}\right),\ d=y-y_n\ge 0;
\end{cases}
\end{equation}
\begin{equation}\label{eq:maglambda-for-imag}
\left|\frac{\lambda (\rho)}{\lambda_c} \right|= 
\begin{cases} 1+|d|\,\frac{1}{2}\left(a_{n}^2-2b_{n}\right)+O\left(|d|^{3/2}\right),\ d=y-y_n\le 0,\\  
1\pm a_{n}\sqrt{d}+db_{n}+O(d^{3/2}),\ d=y-y_n\ge 0.
\end{cases}
\end{equation}
\end{theorem}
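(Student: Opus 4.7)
My plan is to treat this as a specialization of the general machinery already built up in Sections 2--5. The unifying observation is that the hypothesis $\rho_c = iy_n$ forces a very rigid form for the parameter $t_c$, and once $t_c$ is pinned down, everything else is a matter of substitution plus identities at imaginary argument.

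First, I would identify $t_c$. Lemma~\ref{lemma:imaginary-zeros} shows that $t_c = i\sqrt{x_n^2-1}$ satisfies $U_{n-1}(t_c) = (-1)^{(n-1)/2}n$. When $(n-1)/2$ is even (i.e.\ $n=5,9,13,\ldots$) this reads $U_{n-1}(t_c)=n$, so $t_c$ is a zero of $q_1(n,t)$, giving a type-1 double eigenvalue; when $(n-1)/2$ is odd (i.e.\ $n=3,7,11,\ldots$) it reads $U_{n-1}(t_c)=-n$, so $t_c$ is a zero of $q_2(n,t)$, giving type-2. Writing $x_n=\cosh v_n$, I set $\mu_c = \pi/2 - iv_n$ (so that $\cos\mu_c = i\sinh v_n = t_c$) and read off the following key identities by direct computation using Chebyshev's trigonometric definitions:
\begin{equation*}
1-t_c^2 = x_n^2,\qquad T_{n-1}(t_c) = (-1)^{(n-1)/2}T_{n-1}(x_n),\qquad T_n(t_c) = i(-1)^{(n-1)/2}\sqrt{n^2 x_n^2 - 1},
\end{equation*}
together with $\sqrt{n^2 x_n^2-1}=\sqrt{x_n^2-1}\,U_{n-1}(x_n)$ and, from the defining equation $T_n(x_n)=nx_n$, $T_n^2(x_n)=n^2x_n^2$. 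Plugging $t_c$ into the appropriate formula (\ref{eq:rho-of-t-type-1}) or (\ref{eq:rho-of-t-type-2}) and using these identities (along with parity relations for $T_k$ and $U_k$) produces the expression (\ref{eq:y-c}); positivity of $y_n$ follows since both $T_{(n+1)/2}(x_n)$ and $U_{(n-3)/2}(x_n)$ are positive for $x_n>1$.

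Next I substitute $t_c$ into Theorem~\ref{th:gamma-delta}. The combination $(t_c\typemp T_n(t_c))(1\typemp T_{n-1}(t_c))$ is sign-independent of type: checking both branches, it equals $i\sqrt{x_n^2-1}(U_{n-1}(x_n)-1)(T_{n-1}(x_n)-1)$. Dividing by $1-t_c^2=x_n^2$ and pulling this inside the square root of (\ref{eq:gamma-in-terms-of-t}), I find that the bracketed quantity has argument $\pi/2$, its square root has argument $\pi/4$, and the outside $i$ contributes another $\pi/2$, giving $\theta_a = 3\pi/4$. The magnitude reduces at once to (\ref{eq:mag-gamma-for-imag}). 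For (\ref{eq:delta-in-terms-of-t-new-expression}), the same identities show that the LHS prefactor simplifies to $6nix_n^2\sqrt{x_n^2-1}(U_{n-1}(x_n)-1)$ (purely imaginary), while every term on the RHS becomes real: $T_{n-1}^2(t_c)=T_{n-1}^2(x_n)$, $T_n^2(t_c)=-(n^2x_n^2-1)$, the $T_{n-1}$ sign ambiguity and the $\typemp$ cancel, and $t_cT_n(t_c)=-(x_n^2-1)U_{n-1}(x_n)$ in both types (again by the cancellation of two sign swaps). Hence $b$ is purely imaginary with $\theta_b=-\pi/2$, and collecting terms yields (\ref{eq:mag-delta-for-imag}). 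One then gets $\Theta = \theta_b - 2\theta_a = -2\pi$.

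Finally, the geometric and dynamical statements follow by plugging these phases into the results of Section~\ref{section:analytic-matrix-functions}. The cusp bisector is $\theta=\pi-2\theta_a=-\pi/2$ by Corollary~\ref{corollary:cusp-bisector}, which is precisely the downward imaginary direction from $iy_n$; the upward direction $\theta=\pi/2$ is the antipodal ray, so moving along $\rho=iy$ with $y$ near $y_n$ realizes the straight-line parametrization $\rho=\rho_c+d\,e^{-i2\theta_a}$ of (\ref{eq:d}) with $d=y-y_n$. For (\ref{eq:near-cardioid-for-imag}), I substitute $\theta_a=3\pi/4$, $\theta_b=-\pi/2$ into (\ref{eq:near-cardioid}) and use the half-angle identities $\cos^2(\theta/2+3\pi/4)=(1+\sin\theta)/2$, $\sin^2(\theta/2+3\pi/4)=(1-\sin\theta)/2$, and $\cos(\theta-\pi/2)=\sin\theta$, which is a short mechanical simplification. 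Symmetry about the imaginary axis follows from invariance of (\ref{eq:near-cardioid-for-imag}) under $\theta\mapsto \pi-\theta$. Formulas (\ref{eq:reallambda-for-imag})--(\ref{eq:maglambda-for-imag}) drop out of Theorem~\ref{th:evalues-in-terms-of-d} by setting $\cos\Theta=1$ and $\sin\Theta=0$ (which also makes the imaginary part $O(|d|^{3/2})$ below the cusp and eliminates the $O(|d|)$ contribution to $\mathrm{Im}$ above it).

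The main obstacle is the algebraic simplification of (\ref{eq:delta-in-terms-of-t-new-expression}) down to (\ref{eq:mag-delta-for-imag}): one must carefully track the $\typepm$ and $\typemp$ signs in each term and verify the remarkable cancellations that make the whole RHS real and type-independent. Once the four key identities for $t_c$ listed above are in hand, however, each of the six summands on the RHS converts directly, and the final identification of the coefficient of $b$ on the LHS is mechanical.
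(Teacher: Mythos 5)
Your proof is correct and follows essentially the same route as the paper's: identify $t_c = i\sqrt{x_n^2-1}$ as the purely imaginary zero via Lemma~\ref{lemma:imaginary-zeros}, push it through Theorem~\ref{th:preliminary-t} (via (\ref{eq:b11})/(\ref{eq:b1})) to get $y_n$, then through Theorem~\ref{th:gamma-delta} to extract the arguments and magnitudes, observing the type-independence, and finally specialize the Section~\ref{section:analytic-matrix-functions} machinery using $\theta_a=3\pi/4$, $\Theta=-2\pi$. The only point you gloss over, and the paper also handles it only semi-rigorously, is sign determination: you assert $\theta_b=-\pi/2$ (rather than $+\pi/2$), which requires the RHS of (\ref{eq:mag-delta-for-imag}) divided by its bracketed prefactor to be positive, and you do not verify condition (\ref{eq:condition}) (i.e., $a_n^2-2b_n\ne 0$); the paper itself only checks these numerically and via the large-$n$ asymptotics, so you should at least flag that this sign check is needed to pin down $\theta_b$ and hence $\Theta$.
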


\begin{proof}
First let $n=5,9,13,\ldots$. By Lemma \ref{lemma:imaginary-zeros}, the positive-imaginary number $t_c$ given by
\begin{equation}
\label{eq:tc-for-imag}
t_c=i\sqrt{x_n^2-1}
\end{equation}
is a zero of the polynomial $q_1(n,t)$ of Theorem \ref{th:preliminary-t}. Consequently, $\lambda_c=-n$ is a double and type-1 eigenvalue of $K_n(\rho_c)=K_n(iy_{n})$ where, by (\ref{eq:rho-of-t-type-1}),
\begin{equation*}
\rho_c=iy_{n}=\frac{U_{(n-1)/2}\left(i\sqrt{x_n^2-1}\right)}{U_{(n-3)/2}\left(i\sqrt{x_n^2-1}\right)}.
\end{equation*}
Use of (\ref{eq:b11}) then gives the expression for $y_{n}$ in (\ref{eq:y-c}), in which $\sqrt{x_n^2-1}>0$. 

When $n=3,7,11,\ldots$,  the (same) $t_c$ of (\ref{eq:tc-for-imag}) is a zero of $q_2(n,t)$, so that $\lambda_c=-n$ is a double and type-2 eigenvalue of $K_n(\rho_c)=K_n(iy_{n})$, where 
\begin{equation*}
\rho_c=iy_{n}=\frac{T_{(n+1)/2}\left(i\sqrt{x_n^2-1}\right)}{T_{(n-1)/2}\left(i\sqrt{x_n^2-1}\right)}.
\end{equation*}
This time, use of (\ref{eq:b1}) gives the (same) expression (\ref{eq:y-c}). 

Substitute (\ref{eq:tc-for-imag}) into (\ref{eq:gamma-in-terms-of-t}) and use (\ref{eq:b1}) and $a_n\equiv |a|$ to obtain
\begin{equation*}
a=
\sqrt{\frac{2}{n}}\,\frac{(x_n^2-1)^{1/4}}{x_n}\sqrt{[U_{n-1}(x_n)-1][T_{n-1}(x_n)-1]}
\exp\left(i\frac{3\pi}{4}\right)
\end{equation*}
(the choice of the complex square root was arbitrary). By $x_n>1$ and the inequalities in (\ref{eq:b6}), the quantity multiplying $\exp(i3\pi/4)$ is positive. Thus, comparison with (\ref{eq:notation}) gives $\theta_a=3\pi/4$ and (\ref{eq:mag-gamma-for-imag}), where $a_n\equiv |a|$. 

Starting from (\ref{eq:delta-in-terms-of-t-new-expression}), and with $b_n\equiv |b|$, we can show (\ref{eq:mag-delta-for-imag}) and $\theta_b=-\pi/2$ in a similar manner: We first obtain
\begin{multline*}
i6nx_n^2\sqrt{x_n^2-1}[U_{n-1}(x_n)-1]b=12+4(n+1)T_{n-1}^2(x_n)-(5n^2+5n+12)x_n^2\\
-3(n-7)(x_n^2-1)U_{n-1}(x_n)+4(n-2)(x_n^2-1)U_{n-1}^2(x_n)+(n+1)(4x_n^2-3)T_{n-1}(x_n),
\end{multline*}
and then use the equality
\begin{equation}
\label{eq:u-n-1-first}
(x_n^2-1)U_{n-1}^2(x_n)=n^2x_n^2-1,
\end{equation}
which follows from (\ref{eq:b9}) and Lemma \ref{lemma:x-parameter}. Positivity of the RHS of (\ref{eq:mag-delta-for-imag}) was verified numerically and via the large-$n$ formula (\ref{eq:b-large-n}) (see below).

Condition (\ref{eq:condition}) was verified in the same way. Eqn. (\ref{eq:near-cardioid-for-imag}) is then an immediate consequence of (\ref{eq:thetas-for-imag}), (\ref{eq:near-cardioid}), $a_n\equiv |a|$, and $b_n\equiv |b|$; it describes a curve that is symmetric with respect to the imaginary axis.

With $\rho=iy$, $\rho_c=iy_n$, and $\theta_a=3\pi/4$, (\ref{eq:d}) gives $d=y-y_n$. Eqns.   (\ref{eq:reallambda-for-imag})--(\ref{eq:maglambda-for-imag}) then result from (\ref{eq:thetas-for-imag}),  (\ref{eq:reallambda})--(\ref{eq:maglambda}), $a_n\equiv |a|$, and $b_n\equiv |b|$. 
\end{proof}

The corollary that follows gives alternative formulas for $y_n$, $a_n$, and $b_n$ in terms of the parameter $v_n$.

\begin{corollary} 
\label{corollary:in-terms-of-vn}
Let $n=3,5,7,9,\ldots$ and let $v_n$ be the parameter defined in Lemma \ref{lemma:v-parameter}. The $y_n$ of (\ref{eq:y-c}) can also be found via $v_n$ from the equation
\begin{equation}
\label{eq:yn-in-terms-of-vn}
y_n=\frac{\cosh(\frac{n+1}{2}v_n)}{\sinh(\frac{n-1}{2}v_n)}.
\end{equation}
Furthermore, the quantities appearing in the RHSs of (\ref{eq:mag-gamma-for-imag}) and (\ref{eq:mag-delta-for-imag}) (i.e, in the RHSs of the formulas for $a_n$ and $b_n$) can also be found via $v_n$ from
\begin{equation}
\label{eq:xn-in-terms-of-vn}
x_n=\cosh v_n,
\end{equation}
\begin{equation}
\label{eq:sqrt-in-terms-of-vn}
\sqrt{x_n^2-1}=\sinh v_n,
\end{equation}
\begin{equation}
\label{eq:tt-in-terms-of-vn}
T_{n-1}(x_n)=\cosh[(n-1)v_n],
\end{equation}
\begin{equation}
\label{eq:uu-in-terms-of-vn}
U_{n-1}(x_n)=\frac{\sinh (nv_n)}{\sinh v_n}.
\end{equation}
\end{corollary}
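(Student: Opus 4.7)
The plan is to show everything by direct substitution of $x_n=\cosh v_n$ (i.e., equation~(\ref{eq:x-in-terms-of-v}) from Lemma~\ref{lemma:v-parameter}) into the formulas already established in Theorem~\ref{th:evalues-in-terms-of-y}, using the standard hyperbolic representations of the Chebyshev polynomials. Specifically, from the definitions in~(\ref{eq:chebyshev}) together with the identities $T_k(\cosh v)=\cosh(kv)$ and $U_k(\cosh v)=\sinh((k+1)v)/\sinh v$ (which I expect to be recorded in Appendix~\ref{app:b} and which follow, e.g., from (\ref{eq:b101}) or analytic continuation of $T_k(\cos\mu)=\cos(k\mu)$ and $U_k(\cos\mu)=\sin((k+1)\mu)/\sin\mu$), the four substitution formulas (\ref{eq:xn-in-terms-of-vn})--(\ref{eq:uu-in-terms-of-vn}) are immediate.

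To establish (\ref{eq:yn-in-terms-of-vn}), I would plug these identities into the right-hand side of~(\ref{eq:y-c}):
\begin{equation*}
y_n=\frac{1}{\sqrt{x_n^2-1}}\frac{T_{(n+1)/2}(x_n)}{U_{(n-3)/2}(x_n)}
=\frac{1}{\sinh v_n}\cdot\frac{\cosh(\tfrac{n+1}{2}v_n)}{\sinh(\tfrac{n-1}{2}v_n)/\sinh v_n},
\end{equation*}
after which the two $\sinh v_n$ factors cancel, yielding~(\ref{eq:yn-in-terms-of-vn}). Since $v_n>0$ one has $\sinh(\tfrac{n-1}{2}v_n)>0$, so there is no sign ambiguity. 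The statement that the right-hand sides of (\ref{eq:mag-gamma-for-imag}) and (\ref{eq:mag-delta-for-imag}) can be rewritten via $v_n$ is then nothing more than a verbal remark recording that every ingredient appearing in those right-hand sides, namely $x_n$, $\sqrt{x_n^2-1}$, $T_{n-1}(x_n)$, and $U_{n-1}(x_n)$, has been given a closed hyperbolic form in (\ref{eq:xn-in-terms-of-vn})--(\ref{eq:uu-in-terms-of-vn}); no further computation is needed.

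There is essentially no obstacle here: the corollary is a routine rewriting. The only point requiring a moment's care is to confirm that the hyperbolic forms of $T_k$ and $U_k$ at a real argument $\cosh v_n>1$ are the ones used, rather than the trigonometric branch one might naively get from $\mathrm{Arccos}(\cosh v_n)$; this is handled by the remark in Section~\ref{section:notation} that the multivaluedness of $\mathrm{Arccos}$ does not affect the relations, or equivalently by noting that $T_k$ and $U_k$ are polynomials, so the identities $T_k(\cosh v)=\cosh(kv)$ and $U_k(\cosh v)=\sinh((k+1)v)/\sinh v$ hold for all real $v$ by analytic continuation from the well-known trigonometric identities.
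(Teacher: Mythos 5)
Your proof is correct and follows essentially the same route as the paper's own two-line proof: substitute $x_n=\cosh v_n$ from Lemma~\ref{lemma:v-parameter} and invoke the hyperbolic Chebyshev identities (\ref{eq:b101}) and (\ref{eq:b102}) to obtain (\ref{eq:tt-in-terms-of-vn})--(\ref{eq:uu-in-terms-of-vn}) and then (\ref{eq:yn-in-terms-of-vn}) from (\ref{eq:y-c}). The only differences are cosmetic: you display the cancellation of $\sinh v_n$ explicitly and add a remark about the hyperbolic versus trigonometric branch, neither of which the paper's proof records.
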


\begin{proof}
Eqn. (\ref{eq:xn-in-terms-of-vn}) is the same as (\ref{eq:x-in-terms-of-v}) and immediately yields  (\ref{eq:sqrt-in-terms-of-vn}). Eqns. (\ref{eq:yn-in-terms-of-vn}), (\ref{eq:tt-in-terms-of-vn}) and (\ref{eq:uu-in-terms-of-vn}) follow from (\ref{eq:xn-in-terms-of-vn}), (\ref{eq:sqrt-in-terms-of-vn}), (\ref{eq:b101}), and  (\ref{eq:b102}).
\end{proof}
\begin{figure}
\begin{center}
\includegraphics[scale=0.6]{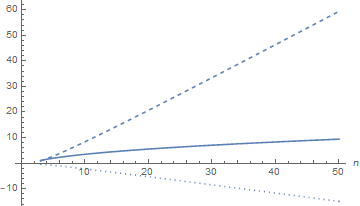}
\end{center}
\caption{Puiseux-series parameters $a_n$ (solid line), $b_n$ (dashed line), and $c_n$ (dotted line), as function of $n$ ($3\le n\le 50$), computed by the methods of Section \ref{section:purely-imaginary-rho}.} \label{fig:Fig6}
\end{figure}
Fig. \ref{fig:Fig6} shows $a_n$, $b_n$, and 
\begin{equation}
\label{eq:parameter-c-n}
c_n=\frac{a_n^2-2b_n}{2}
\end{equation}
($c_n$ is our new notation for the  parameter $c$ of (\ref{eq:parameter-c})) as a function of $n$, for $3\le n\le 50$.

\begin{example}
For the case $n=19$, Lemma \ref{lemma:v-parameter} gives $v_{19}\cong 0.192$. Corollary \ref{corollary:in-terms-of-vn}, (\ref{eq:mag-gamma-for-imag}), and (\ref{eq:mag-delta-for-imag}) then give $y_{19}\cong 1.28$, $a_{19}\cong 5.39$, and $b_{19}\cong 19.4$. With these values, eqn. (\ref{eq:maglambda-for-imag}) of Theorem   \ref{th:evalues-in-terms-of-y} produces the normalized magnitude variations shown in Fig. \ref{fig:Fig7} as the dashed lines. As before, the solid lines are the numerically computed eigenvalues and can be considered exact.
\begin{figure}
\begin{center}
\includegraphics[scale=0.4]{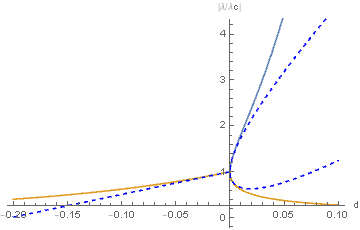}
\end{center}
\caption{Magnitudes of normalized eigenvalues along imaginary axis as function of $d=y-y_{19}\cong y-1.28$, for $n=19$. The solid lines can be considered exact, while the dashed lines have been computed via the formulas in Section \ref{section:purely-imaginary-rho}. Here, as opposed to Figs. \ref{fig:Fig4} and \ref{fig:Fig5}, there is only one solid line for $d<0$.}
\label{fig:Fig7}
\end{figure}
\begin{figure}
\begin{center}
\includegraphics[scale=0.5]{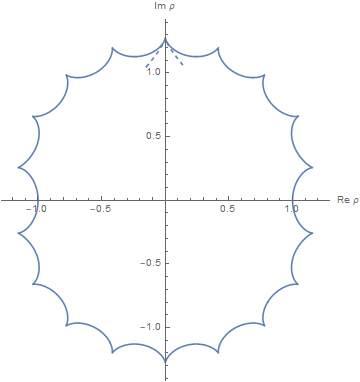}
\end{center}
\caption{Type-2 borderline curve $B_{19}^{(2)}$ (solid line), together with local level curve corresponding to $\rho_c=iy_{19}\cong i1.28$ (dashed line).
} \label{fig:Fig8}
\end{figure}
Eqn. (\ref{eq:near-cardioid-for-imag}) of Theorem \ref{th:evalues-in-terms-of-y} gives the local level curve shown in Fig. \ref{fig:Fig8} as the two dashed lines. The closed solid line in Fig. \ref{fig:Fig8} is the borderline curve $B_{19}^{(2)}$, computed by the methods of \cite{FikMavr}.  Note that $B_{19}^{(2)}$ has 18 cusps, corresponding to the $n-1=18$ zeros of the polynomial $q_2(n,t)=U_{n-1}(t)+n=U_{18}(t)+19$ that appears in Theorem \ref{th:preliminary-t}. 
\end{example}

\subsection{Consequences; extensions}
\label{section:consequences-KMS}

The heretofore results of Section \ref{section:purely-imaginary-rho} lead to further conclusions as follows.
\\
\\
(i) Corollary \ref{corollary:isotropic} and  Theorem \ref{th:eigenvectors} give 
the elements $v_{cj}$ of the  isotropic eigenvector $\textit{\textbf{v}}_c$ associated with $\lambda_c=-n$. When $\rho_c=iy_n$, we can work as we did above to express $v_{cj}$ in terms of the   parameter $x_n$. In the type-1 case ($n=5,9,13,\ldots$), the final result is
\begin{equation*}
v_{cj}=\mathrm{sgn}\left(\frac{n-1}{2}-j\right)\, U_{|(n-1-2j)/2|-1}\left(i\sqrt{x_n^2-1}\right),
\quad j=0,1,\ldots,n-1.
\end{equation*}
where $\mathrm{sgn}(x)=-1$, $0$, $1$ when $x<0$, $x=0$, $x>0$, respectively, where $U_{-1}(z)=0$, and where we omitted an overall factor. Similarly, the result pertaining to the type-2 case ($n=3,7,11,\ldots$) is
\begin{equation*}
v_{cj}=
T_{|(n-1-2j)/2|}\left(i\sqrt{x_n^2-1}\right),\quad j=0,1,\ldots,n-1.
\end{equation*}
In both cases, the vector elements $v_{cj}$ alternate between real and purely imaginary values.
\\
\\
(ii) We can further show that $-iy_{n}$ is also a critical point. It lies on the \textit{negative} imaginary axis and is of the same type (1 or 2) as  $\rho_c=+iy_{n}$. The imaginary axis bisects both associated cusps and is an axis of symmetry of both associated level curves, see the examples in Figs.~\ref{fig:Fig1} and \ref{fig:Fig8}. 
\\
\\
(iii) Observe from Fig. \ref{fig:Fig6} (and, also, from (\ref{eq:a-large-n}) below) that $a_n$ increases with $n$. This observation together with (\ref{eq:reallambda-for-imag}) and (\ref{eq:maglambda-for-imag}) imply that ``more bifurcation'' (in the sense specified in Conclusion (iii) of Section \ref{section:consequences})   occurs when $n$ becomes larger. Compare the examples in Figs. \ref{fig:Fig4}, \ref{fig:Fig5}, and \ref{fig:Fig7}.
\\
\\
(iv)
When we view (\ref{eq:reallambda-for-imag}) and (\ref{eq:imaglambda-for-imag}) in the light of Corollary \ref{corollary:symmetries} and the conditions $a_n>0$ and $b_n>0$ we understand that, upon crossing the cusp, the two exactly complex-conjugate eigenvalues (which initially have nonzero imaginary parts) become two exactly real eigenvalues. In other words, in the special case where the critical $\rho_c$ lies on the imaginary axis, Conclusion (i) of Section \ref{section:consequences} is \textit{exactly} true (not only to leading order). Corollary \ref{corollary:symmetries} also explains why Fig. \ref{fig:Fig7} only has \textit{one} solid line (not two) for $d<0$.
\\
\\
(v) As seen from Fig. \ref{fig:Fig6} (and, also, from the large-$n$ results of the next section), we always have $c_n<0$. Therefore, by Conclusion (ii) of Section \ref{section:consequences}, the normalized magnitudes $|\lambda_c(iy)/\lambda_c|$ always \textit{increase} before they reach the value $1$ and bifurcate; see the example in Fig. \ref{fig:Fig7}. This means that the magnitudes $|\lambda_c(iy)|$ increase, reach the value $|\lambda_c|=n$, and then bifurcate.
\\
\\
(vi) 
By Definition 1.1 of \cite{FikMavr}, an eigenvalue $\lambda$ of $K_n(\rho)$ is called \textsl{extraordinary} if $|\lambda|>n$. Conclusion (v) above tells us that, upon crossing the cusp, the number of  extraordinary type-$k$ eigenvalues increases by 1, where $k$ is the type of the associated borderline curve $B_n^{(k)}$. This conclusion is true, more generally, for any movement of the kind described in Conclusion (iii) of Section \ref{section:consequences}  (after arriving at the cusp, turn without heading backwards, and continue along a straight line): Such a movement, too, increases the number of type-$k$ extraordinary eigenvalues by 1. We note that Section 6.2 of \cite{FikMavr} arrives at a similar conclusion via different means---in fact, the conclusion there is conditional, as it is subject to the conjecture that the closed curve $B_n^{(k)}$ is a Jordan curve. 
\begin{figure}
\begin{center}
\includegraphics[scale=0.4]{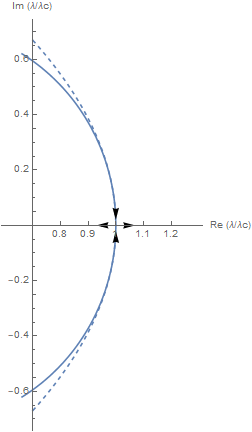}
\end{center}
\caption{Parabola of (\ref{eq:parabola}) (dashed line), together with actual eigenvalue trajectories (solid line), for $n=19$ (i.e., for the same case as in Figs. \ref{fig:Fig7} and \ref{fig:Fig8}). The arrows give the direction of movement of the two eigenvalues. The point $(1,0)$ is the collision (bifurcation) point. \label{fig:Fig9}}
\end{figure}
\\
\\
(vii) For $d=y-y_n\le 0$, let us denote $\chi\equiv 1-|d|b_n$ and $\psi\equiv\pm a_n\sqrt{|d|}$, so that $\chi$ consists of the two leading terms in the normalized  real parts before bifurcation, see the top line in (\ref{eq:reallambda-for-imag}). Similarly, $\psi$ is the leading term of the normalized imaginary part, see (\ref{eq:imaglambda-for-imag}). Elimination of $|d|$ gives
\begin{equation}
\label{eq:parabola}
\psi^2=\frac{a_n^2}{b_n}(1-\chi).
\end{equation} 
Eqn. (\ref{eq:parabola}), which is the equation of a parabola, gives the approximate trajectory in the normalized complex $\lambda$-plane. Before bifurcation, the two complex-conjugate normalized eigenvalues move along the parabola of (\ref{eq:parabola}) heading toward the parabola's vertex $(\chi,\psi)=(1,0)$, which is the collision (bifurcation) point. After bifurcation, we already know (e.g. from Conclusion (vi) above) that the two move toward opposite directions along the real-$\chi$ axis. For $n=19$, the situation is illustrated in Fig. \ref{fig:Fig9}, which shows the parabola as a dashed line, as well as the actual trajectories as the solid line. As usual, the solid line was found by numerically computing the eigenvalues. The arrows give the direction of movement and correspond to increasing $d$ and increasing $y$. Let us add that, before bifurcation, the more general situation described by (\ref{eq:reallambda}) and (\ref{eq:imaglambda}) also yields a trajectory that is parabolic. In the more general case, after bifurcation the eigenvalues are, typically, not \textit{exactly} real.

\section{Purely imaginary $\rho_c$: Large-$n$ formulas}
\label{section:large-n}

Figs. \ref{fig:Fig7}--\ref{fig:Fig9} demonstrate that the formulas in Section \ref{section:purely-imaginary-rho} can give satisfactory numerical results even for large values of $n$, provided that we remain close to the point $y_n$ of interest. We thus develop large-$n$ formulas for the Puiseux-series parameters $y_{n}$, $a_{n}$, and $b_{n}$.

 We first postulate a large-$n$ solution of (\ref{eq:v-parameter}) in the form $v_n\cong \alpha\ln(\beta n)/n$. Since the left-hand side of (\ref{eq:v-parameter}) behaves like $(\beta n)^\alpha/2$ and the RHS like $n$, equating the two sides gives $\alpha=1$ and $\beta=2$. We have thus obtained
\begin{equation}
\label{eq:vn-large-n}
v_n\cong \frac{\ln(2n)}{n}.
\end{equation}
Eqn. (\ref{eq:vn-large-n}) shows that the arguments of the hyperbolic functions in (\ref{eq:yn-in-terms-of-vn}) are large. We thus replace those functions by their large-argument
approximations to get $y_n\cong\exp{(\frac{n+1}{2}v_n)}/\exp{(\frac{n-1}{2}v_n)}=\exp(v_n)$. We then invoke $(\ref{eq:vn-large-n})$ once more to get 
\begin{equation}
\label{eq:y-large-n}
y_n\cong (2n)^{1/n}.
\end{equation}

The small-$v_n$ approximations to the RHSs of (\ref{eq:xn-in-terms-of-vn}) and (\ref{eq:sqrt-in-terms-of-vn}) yield 
\begin{equation}
\label{eq:xn-sqrt-large-n}
x_n\cong 1+\frac{1}{2}v_n^2\cong 1+\frac{1}{2}\left[\frac{\ln(2n)}{n}\right]^2,\quad \sqrt{x_n^2-1}\cong v_n\cong \frac{\ln(2n)}{n}.
\end{equation}
Since $(n-1)v_n$ and $nv_n$ are large, (\ref{eq:tt-in-terms-of-vn}), (\ref{eq:uu-in-terms-of-vn}), and (\ref{eq:vn-large-n}) give
\begin{equation}
\label{eq:tt-uu-large-n}
T_{n-1}(x_n)=\frac{1}{2}\exp[(n-1)v_n]\cong n-\ln(2n),\quad
U_{n-1}(x_n)=\frac{\exp(nv_n)}{2 v_n}\cong\frac{n^2}{\ln(2n)}.
\end{equation} 

The last step is to substitute (\ref{eq:xn-sqrt-large-n})--(\ref{eq:tt-uu-large-n}) into (\ref{eq:mag-gamma-for-imag}) and (\ref{eq:mag-delta-for-imag}). The final results thus obtained are
\begin{equation}
\label{eq:a-large-n}
a_{n}\cong\sqrt{2n}-
\frac{\ln(2n)+1}{\sqrt{2n}}
\end{equation}
and
\begin{equation}
\label{eq:b-large-n}
b_{n}\cong\frac{4}{3}n-\frac{4}{3}[\ln(2n)+1],
\end{equation}
where we discarded terms of high order.

Numerically, the errors in (\ref{eq:y-large-n}), (\ref{eq:a-large-n}), and (\ref{eq:b-large-n})  are, respectively,  5.2\%, 0.4\%, and 1.7\%, when $n=19$. The errors decrease to 2\%, 0.04\%, and 0.3\% when $n=55$, and to 0.6\%, 0.002\%, and 0.06\% when $n=155$.

\section{Discussion}

The small-$|\epsilon|$ studies of this paper resulted in low-order formulas, with errors of $O\left(|\epsilon|^{3/2}\right)$. Furthermore, with the exception of Section \ref{section:large-n}, we assumed that $n$ is fixed. It would thus be illuminating to carry out, if possible, similar studies which would assume that $n$ is large from the outset. A natural first goal would be to find simple, large-$n$  approximations to the global borderline curves $B_n^{(1)}$ and $B_n^{(2)}$. 

Large-$n$ analyses pertaining to $n\times n$ Toeplitz matrices usually start from the  symbol of the associated Laurent matrix \cite{Bottcher1, Bottcher2}. As we already mentioned in the Introduction, the Laurent matrix corresponding to $K_n(\rho)$ has a well-defined and bounded symbol only when $|\rho|<1$. Numerical results indicate that  all $B_n^{(k)}$ lie \textit{outside} the unit circle; see, e.g., Figs. \ref{fig:Fig1}--\ref{fig:Fig3} and Fig. \ref{fig:Fig8}. It thus seems that the aforesaid symbol cannot help in approximating $B_n^{(k)}$. Instead, the obvious starting point is the 
\textit{exact} formula for $B_n^{(k)}$, see Corollary 4.3 of \cite{FikMavr}, which is the formula that produced the $B_n^{(k)}$ in the aforementioned figures.

\section{Acknowledgments}

Thanks to Ioannis Chremmos, Dionisios Margetis, and Christos Papapanos for useful discussions.

\appendix

\section{Useful properties of Chebyshev polynomials}
\label{app:b}

This appendix lists certain properties of Chebyshev polynomials. All can be found in \cite{Wolfram}, or follow easily from the definitions (\ref{eq:chebyshev}). Throughout, we assume $z\in\mathbb{C}$. The square root $\sqrt{1-z^2}$ is single-valued, but its choice is arbitrary.
\begin{equation}
\label{eq:b7}
T_{2k}(z)=2T_k^2(z)-1,\quad k=0,1,2,\ldots;
\end{equation}
\begin{equation}
\label{eq:b8}
T_{k}(z)=2zT_{k+1}(z)-T_{k+2}(z),\quad k=0,1,2,\ldots;
\end{equation}
\begin{equation}
\label{eq:b2}
T_{k}(x)=2^{k-1}x^k+o(x^{k}),\quad x\to+\infty,\quad k=1,2,\ldots;
\end{equation}
\begin{equation}
\label{eq:b5}
U_k(1)=k+1,\quad k=0,1,2,\ldots;
\end{equation}
\begin{equation}
\label{eq:b4}
T_k(1)=1,\quad k=0,1,2,\ldots;
\end{equation}
\begin{equation}
\label{eq:b6}
U_k(x)>k+1,\quad T_k(x)>1,\quad (x>1,\quad k=1,2,\ldots);
\end{equation}
\begin{equation}
\label{eq:b3}
\frac{d}{dz}T_k(z)=kU_{k-1}(z),\quad k=0,1,2,\ldots;
\end{equation}
\begin{equation}
\label{eq:b101}
T_{k}(\cosh x)=\cosh(kx),\quad x>1,\quad k=0,1,2,\ldots;
\end{equation}
\begin{equation}
\label{eq:b51}
U_{2k}(-z)=U_{2k}(z),\quad k=0,1,2,\ldots;
\end{equation}
\begin{equation}
\label{eq:b11}
U_{k}\left(\sqrt{1-z^2}\right)=
\begin{cases}
(-1)^\frac{k}{2}\,z^{-1}\,T_{k+1}(z),\quad k=0,2,4,\ldots,\\
(-1)^\frac{k-1}{2}\,\frac{\sqrt{1-z^2}}{z}\,U_{k}(z),\quad k=1,3,5,\ldots;
\end{cases}
\end{equation}
\begin{equation}
\label{eq:b1}
T_{k}\left(\sqrt{1-z^2}\right)=
\begin{cases}
(-1)^\frac{k}{2}\,T_{k}(z),\quad k=0,2,4,\ldots,\\
(-1)^\frac{k-1}{2}\,\sqrt{1-z^2}\,U_{k-1}(z),\quad k=1,3,5,\ldots;
\end{cases}
\end{equation}
\begin{equation}
\label{eq:b9}
(z^2-1)\left[U_{k-1}(z)\right]^2=\left[T_k(z)\right]^2-1,\quad k=1,2,3,\ldots;
\end{equation}
\begin{equation}
\label{eq:b102}
U_{k}(\cosh x)=\frac{\sinh[(k+1)x]}{\sinh x},\quad x>1,\quad k=0,1,2,\ldots.
\end{equation}

\bibliographystyle{amsplain}

\end{document}